\documentclass{article}
\usepackage[utf8]{inputenc}
\usepackage{amsmath}
\usepackage{amsthm}
\usepackage{amssymb}
\usepackage{caption}
\usepackage{subcaption}
\usepackage{graphicx}

\newtheorem{theorem}{Theorem}[section]
\newtheorem{corollary}[theorem]{Corollary}
\newtheorem{example}[theorem]{Example}
\newtheorem{lemma}[theorem]{Lemma}
\newtheorem{prop}[theorem]{Proposition}

\theoremstyle{definition}
\newtheorem{definition}[theorem]{Definition}
\newtheorem{remark}[theorem]{Remark}
\usepackage[capitalize]{cleveref}
	\crefname{claim}{Claim}{Claims}
	\Crefname{claim}{Claim}{Claims}
	\crefname{app-corollary}{Corollary}{Corollaries}
	\Crefname{app-corollary}{Corollary}{Corollaries}
	\crefname{app-definition}{Definition}{Definitions}
	\Crefname{app-definition}{Definition}{Definitions}
	\crefname{figure}{Figure}{Figures}
	\Crefname{figure}{Figure}{Figures}
	\crefname{lemma}{Lemma}{Lemmata}
	\Crefname{lemma}{Lemma}{Lemmata}
	\crefname{app-lemma}{Lemma}{Lemmata}
	\Crefname{app-lemma}{Lemma}{Lemmata}
	\crefname{app-proposition}{Proposition}{Proposition}
	\Crefname{app-proposition}{Proposition}{Proposition}
	\crefname{app-theorem}{Theorem}{Theorems}
	\Crefname{app-theorem}{Theorem}{Theorems}

\newcommand{\Comp}{\operatorname{Comp}}
\newcommand{\Des}{\operatorname{Des}}
\newcommand{\sd}{\mathfrak{d}}

\title{A Generalization of Descent Polynomials}
\author{{\Large Angel Raychev}
\and
 Electronic address: \texttt{angel11drakon@gmail.com}
\and
125 High school, Sofia, Bulgaria}
\date{September, 2021}

\begin{document}

\maketitle

\begin{abstract}
The notion of a \textit{descent polynomial}, a function in enumerative combinatorics that counts permutations with specific properties, enjoys a revived recent research interest due to its connection with other important notions in combinatorics, viz. \textit{peak polynomials} and \textit{symmetric functions}. We define the function $\sd^{m}(I,n)$ as a generalization of the descent polynomial and we prove that for any positive integer $m$, this function is a polynomial in $n$ for sufficiently large $n$ (similarly to the descent polynomial). We obtain an explicit formula for $\sd^{m}(I,n)$ when $m$ is sufficiently large. We look at the coefficients of $\sd^{m}(I,n)$ in different falling factorial bases. We prove the positivity of the coefficients and discover a combinatorial interpretation for them. This result is similar to the positivity result of Diaz-Lopez et al. for the descent polynomial.
\end{abstract}

\section{Introduction}

The \textit{descent polynomial} is a function in enumerative combinatorics that counts permutations with specific properties. It originates from 1915, when MacMahon \cite{macmahon2001combinatory} introduced it in his book \textit{Combinatory Analysis}. However, in the following years, there was not much published about the descent polynomial. It was only in 2017 when the topic was revisited by Diaz-Lopez, Harris, Insko, Omar, and Sagan \cite{diaz2019descent}. In their paper, Diaz-Lopez et al. gave two recurrence relations \cite[Section 2]{diaz2019descent} for the descent polynomial, looked at its coefficients, and investigated its roots. In 2018, Bencs \cite{bencs2021some} answered some questions about the coefficients of the descent polynomial, which were stated earlier by Diaz-Lopez et al. Since then, there were several other publications looking at some generalizations of the descent polynomial. However, one intuitive generalization about the multiplicity of the elements has not been studied so far. In this paper, we look at this generalization. 

First, let us define the original descent polynomial that was studied by \ \ \ Diaz-Lopez et al. in 2017. Let $I$ be a finite set of positive integers. We consider permutations of the set $\{1,2,\dots,n\}$. The \textit{descent polynomial} is equal to the number of permutations for which the set of all positions in the permutation where the corresponding elements are bigger than the next ones is exactly $I$. We study the following generalization, which has not been studied yet: instead of the set $\{1,2,\dots,n\}$, we are considering permutations of the multiset $\{\underbrace{1,\dots,1}_{m},\underbrace{2,\dots,2}_{m},\dots, \underbrace{n,\dots,n}_{m}\}$, where each number is of multiplicity $m$. In this case we use the notation $\sd^{m}(I,n)$. Note that the descent polynomial is equal to $\sd^{1}(I,n)$. Similarly to the descent polynomial, we prove that $\sd^{m}(I,n)$ is also a polynomial in $n$ (Theorem \ref{theo5}).

One interesting observation we make is that as $m$ increases, $\sd^{m}(I,n)$ also increases, but from some point onward $\sd^{m}(I,n)$ stabilizes. So, we introduce the notation $d^{\infty}(I,n)$ for the stabilized value of $\sd^{m}(I,n)$. We give an explicit formula for computing $d^{\infty}(I,n)$ (Theorem \ref{theo6}).
\[d^{\infty}(I,n)=\sum\limits_{A\in \Comp(t)}(-1)^{t-s}\binom{n-1+\sigma_{1}}{\sigma_{1}} \dots \binom{n-1+\sigma_{s-1}}{\sigma_{s-1}}
\left(\binom{n-1+\sigma_{s}}{\sigma_{s}}-1\right),\]
where the summation is over all compositions $A\in\Comp(t)$, $t$ is the size of $I$, and $\sigma_{1}, \sigma_{2},\ldots, \sigma_{s}$ depend on $A$ and $I$.

We look at the coefficients of $d^{\infty}(I,n)$ in falling factorial bases of the type $\left(\binom{n+k}{i}\right)_{i=0}^{\infty}$. If $k\leq -1$, all coefficients are non-negative integers. If $k\geq 0$, all coefficients are integers, but there exists at least one negative. For $k=-1$, we give a combinatorial interpretation to the coefficients of $d^{\infty}(I,n)$ and for $k=0$, we find the exact values of some of the coefficients.

The paper is organized in the following way. In Section \ref{pre} we introduce some useful notations.  In Section \ref{gen} we show some general properties of $\sd^{m}(I,n)$. Then in Section \ref{pol} we prove that $\sd^{m}(I,n)$ is a polynomial in $n$ for sufficiently large $n$. In Section \ref{jac} we give an alternative proof for the polynomiality of $\sd^{m}(I,n)$. In Section \ref{form} we introduce the function $d^{\infty}(I,n)$ and we give an explicit formula for computing it. In Section \ref{coef} we look at the coefficients of $d^{\infty}(I,n)$.

\section{Preliminaries}\label{pre}

In this section, we introduce some notations which are used throughout the paper.  

For the rest of the paper, we assume that $n$ and $m$ are positive integers and $I$ is a finite set of positive integers. If $I$ is non-empty, we let $I=\{\alpha_{1},\alpha_{2},\ldots,\alpha_{t}\}$, where $\alpha_{1}<\alpha_{2}<\cdots<\alpha_{t}$. By $I^{-}$ we denote the set $I$ without its biggest element $\alpha_{t}$, so $I^{-}=\{\alpha_{1},\alpha_{2},\ldots,\alpha_{t-1}\}$. Also, for a non-empty set $I$, we let $L$ be the length of the longest sequence of consecutive numbers in $I$. For example, if $I=\{2,3,5,7,10,11,12\}$, $L=3$, because $10,11,12$ is the longest sequence of consecutive numbers in $I$.  

Let $\Comp(t)$ be the set of all compositions of $t$ and let $\Comp(t,m)$ be the set of all compositions of $t$, which do not have elements bigger than $m$. Recall that a composition of a positive integer $t$ is an ordered sequence of positive integers with sum $t$. 

\begin{definition}
Let $v=(v_{1},v_{2},v_{3},\dots, v_{\ell}$) be a finite sequence of positive integers. We define
$$\Des(v) = \{i\in\{1,2,3,\dots,\ell-1\} : v_{i}>v_{i+1}\}$$
to be the \textit{descent set} of the sequence $v$.
\end{definition}

\begin{example}
If $v=(1,3,2,6,1,1,9,3)$, then $\Des(v)=\{2,4,7\}.$
\end{example}

Next, we formally define the generalization of the descent polynomial, which we are considering in this paper.

\begin{definition}
Let $S_{n}^{(m)}$ be the set of all permutations of the multiset 
$$\{\underbrace{1,1,\dots,1}_{m},\underbrace{2,2,\dots,2}_{m},\dots,\underbrace{n,n,\dots,n}_{m}\}.$$
We define the function $\sd^{m}(I,n)$ such that
$$\sd^{m}(I,n) = \# \{w\in S_{n}^{(m)} : \Des(w)=I\}.$$
\end{definition}

\begin{example}
If $n=3$, $m=2$, and $I=\{2\}$, then $\sd^{2}(\{2\},3)=5$, because there are $5$ such permutations:
\[(1,2,1,2,3,3) \ \ (1,3,1,2,2,3) \ \ (2,2,1,1,3,3) \ \ (2,3,1,1,2,3) \ \ (3,3,1,1,2,2).\]
\end{example}

\begin{remark}\label{rem2}
A different way of visualizing the permutations we are considering is by semistandard Young tableaux of ribbon shape. Recall that a semistandard Young tableau is a Young diagram filled with positive integers such that the numbers are weakly increasing from left to right and are strictly decreasing from bottom to top. Recall also that a ribbon shape is a connected skew shape without any $2\times 2$ squares in it. The shape of the ribbon is determined by the descent set $I$ and there are $n m$ cells in it.
\end{remark}

\begin{example}
In Figure \ref{fig7} we show an example of a semistandard Young tableau of ribbon shape for $I=\{4,8,9\}$, $n=5$, and $m=3$. 

\begin{figure}[h]
    \centering
    \captionsetup{justification=centering}
    \includegraphics[scale=0.7]{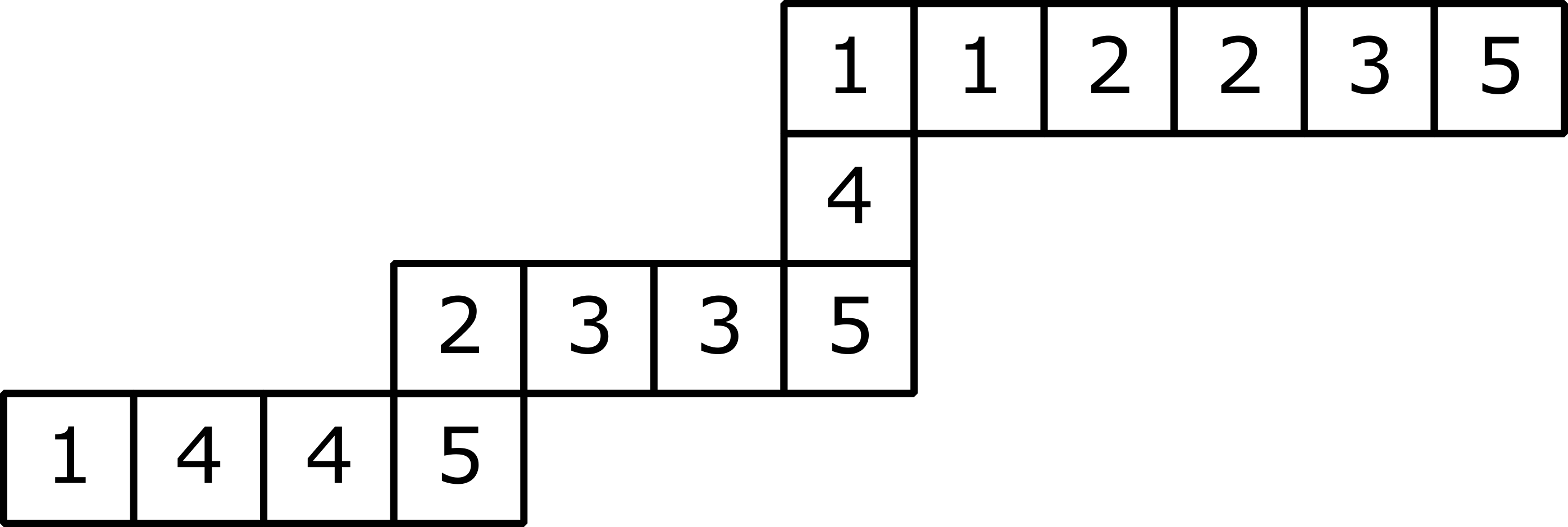}
    \caption{A semistandard Young tableau for\\
    $I=\{4,8,9\}$, $n=5$, and $m=3$.}
    \label{fig7}
\end{figure}
\end{example}

\begin{remark}\label{rem3}
Recall that each Young diagram is determined by a unique partition $\lambda=[\lambda_{1},\lambda_{2},\ldots,\lambda_{s}]$ in a such way that in the $i$-th row of the diagram there are $\lambda_{i}$ squares. For example, if $\lambda=[9,7,3,3,1]$, the corresponding Young diagram is shown in Figure \ref{fig4}.

\begin{figure}[h]
    \centering
    \includegraphics[scale=0.7]{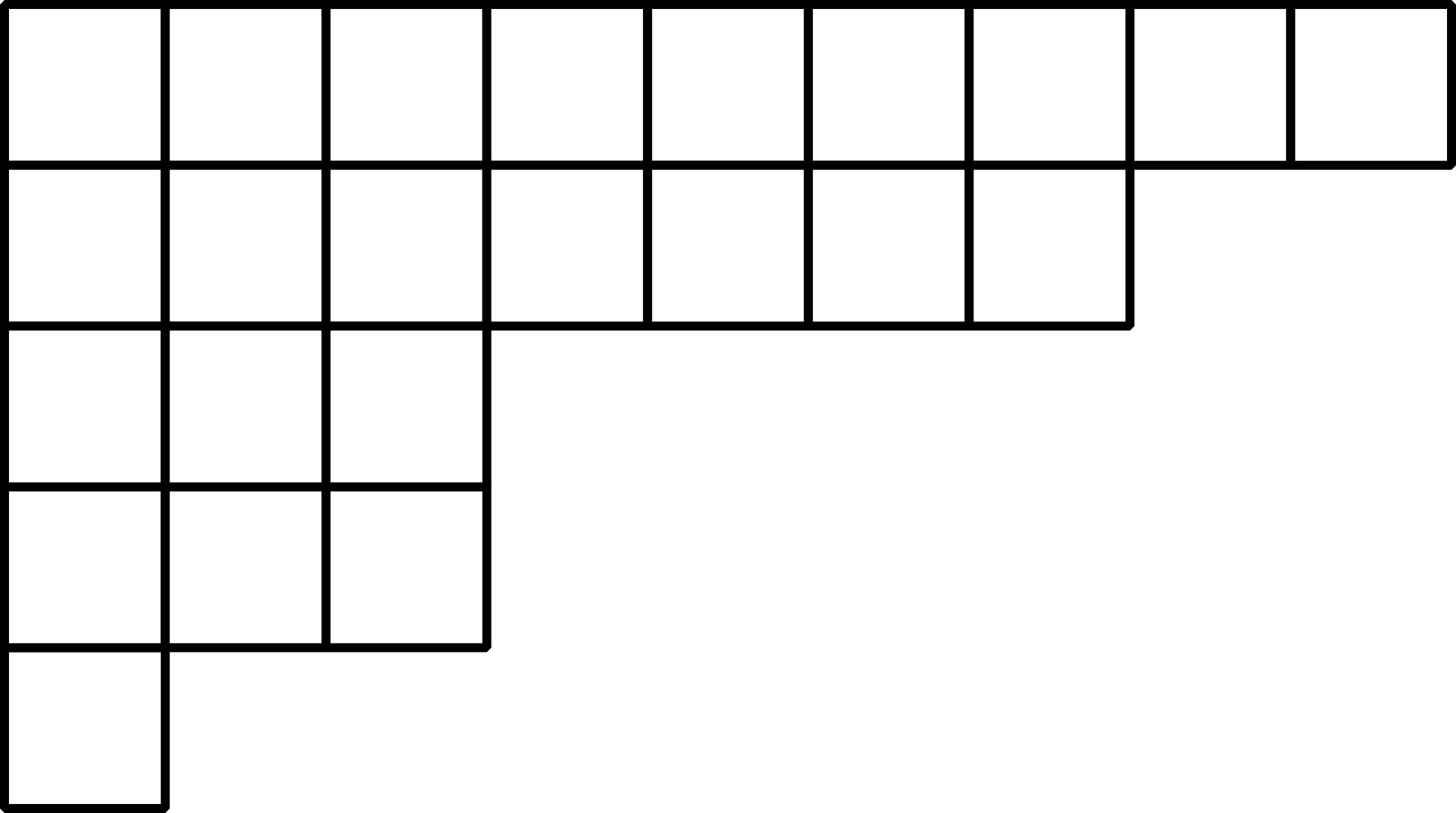}
    \caption{The Young diagram corresponding to $\lambda=[9,7,3,3,1]$.}
    \label{fig4}
\end{figure}
Recall also that each skew shape is a difference of two Young diagrams $\lambda$ and $\mu$ for which $\mu$ is inside $\lambda$. For example, if $\lambda=[9,7,3,3,1]$ and $\mu=[5,2,1]$, the skew shape corresponding to the difference $\lambda/\mu$ is shown in pink in \linebreak Figure \ref{fig5}.
\begin{figure}[h]
    \centering
    \captionsetup{justification=centering}
    \includegraphics[scale=0.7]{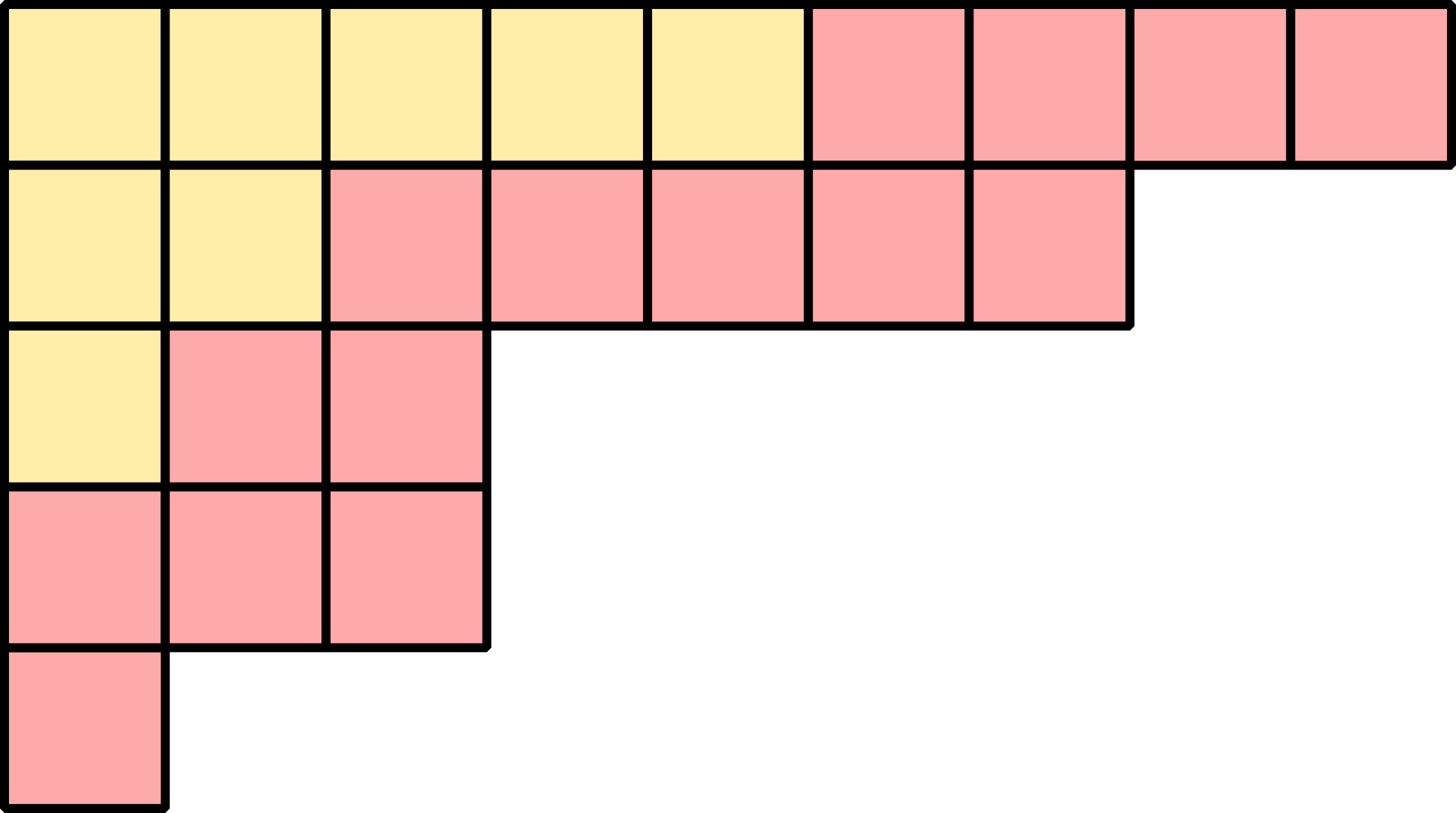}
    \caption{The skew shape corresponding to $\lambda/\mu$ for\\ $\lambda=[9,7,3,3,1]$ and $\mu=[5,2,1]$. }
    \label{fig5}
\end{figure}
\end{remark}

\section{General properties of $\sd^{m}(I,n)$}\label{gen}

In this section, we show that $\sd^{m}(I,n)$ is weakly increasing with $m$. We prove that from some $m$ onward, the function $\sd^{m}(I,n)$ stabilizes and we find the exact value of the stabilization point.

\begin{prop}\label{prop2}
For a positive integer $n > \alpha_{t} = \max(I)$, we have the following inequality
\[
\sd^{1}(I,n)\leq \sd^{2}(I,n)\leq \sd^{3}(I,n)\leq \sd^{4}(I,n)\leq\cdots.
\]
\end{prop}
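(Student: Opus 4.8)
The plan is to construct, for each $m$, an injection $\phi\colon \{w\in S_n^{(m)} : \Des(w)=I\}\hookrightarrow \{w'\in S_n^{(m+1)} : \Des(w')=I\}$. The natural idea is to take a permutation $w$ of the multiset with multiplicity $m$ and insert the $n$ new copies (one extra copy of each of $1,2,\dots,n$) into $w$ in a canonical way that preserves the descent set. Concretely, I would try inserting each new copy of a value $k$ immediately adjacent to an existing copy of $k$: duplicating an entry $w_i=k$ into $w_i,w_i$ never creates or destroys a descent at the duplicated position (since $k \not> k$), and the descents on either side are governed by the same comparisons $w_{i-1}$ vs $k$ and $k$ vs $w_{i+1}$ as before. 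So the map "replace the first occurrence of every value $k$ by two consecutive copies of $k$" sends a word with descent set $I$ to a longer word with the same descent set $I$, and it is clearly injective (erase one copy from each maximal block, in a prescribed way, to invert it). This would establish $\sd^m(I,n)\le \sd^{m+1}(I,n)$ for all $m$.

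The main technical point to get right is that the target word indeed lies in $S_n^{(m+1)}$ — i.e. that doubling one occurrence of each value yields a word in which every value appears exactly $m+1$ times — which is immediate by construction, and that the insertion is well-defined for every word, including when a value already sits in a long block. Since $k\not>k$, inserting next to any existing copy is safe regardless; the only care needed is to fix a deterministic rule (say, always double the left-most copy of each value) so that $\phi$ is a genuine function and so that the inverse is unambiguous. I do not expect the hypothesis $n>\alpha_t$ to be needed for this monotonicity argument at all; it is presumably there because the whole section (and the stabilization statement that follows) is phrased under that assumption, or to ensure the sets in question behave nicely, but the injection works verbatim without it. If the author instead wants a bijective/RSK-flavored argument via the ribbon semistandard Young tableaux of Remark \ref{rem2}, the same idea reappears: adding one to each part-multiplicity corresponds to an inclusion of tableau sets of a fixed ribbon shape as the bounding entry grows, and the inclusion map "increment nothing, just allow a larger alphabet" is the injection — but the multiset-word formulation above is the most elementary route.

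The step I expect to be the only real obstacle is verifying crisply that $\Des$ is unchanged, i.e. carefully checking the three positions affected by a single duplication (the new internal position, and the two boundary positions) and confirming none of the comparisons flips; this is a short case analysis on whether $w_{i-1}<k$, $=k$, or $>k$ and similarly for $w_{i+1}$, using only $k\not>k$. Everything else — injectivity, landing in the right set — is bookkeeping. I would present it as: define $\phi$, check $\phi(w)\in S_n^{(m+1)}$, check $\Des(\phi(w))=\Des(w)=I$ via the case analysis, then exhibit the explicit left inverse (delete the second copy in the designated doubled block of each value), and conclude the chain of inequalities.
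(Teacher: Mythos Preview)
Your injection does not preserve the descent set. The descent set $\Des(w)$ records \emph{positions}, not the abstract pattern of ups and downs, so inserting new entries into the interior of the word shifts every subsequent index. Concretely, take $I=\{2\}$, $n=3$, $m=1$, and $w=(1,3,2)\in S_3^{(1)}$ with $\Des(w)=\{2\}$. Your map ``double the first occurrence of each value'' yields $w'=(1,1,3,3,2,2)$, whose descent set is $\{4\}$, not $\{2\}$. So $\phi(w)$ does not land in the target set $\{w'\in S_n^{(m+1)}:\Des(w')=I\}$, and the argument collapses. Your three-position case analysis is correct as a statement about the local up/down pattern, but that is not what $\Des$ measures.

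The paper's fix is to add the $n$ new copies only \emph{after} position $\alpha_t$, where all entries sit in the weakly increasing tail. Precisely: since every $w$ with $\Des(w)=I$ is determined by its first $\alpha_t$ entries (the rest being the remaining multiset in weakly increasing order), one maps $w\in S_n^{(m)}$ to the unique $w'\in S_n^{(m+1)}$ with the same first $\alpha_t$ entries. This leaves positions $1,\ldots,\alpha_t-1$ untouched, so all descents in $I^{-}$ survive trivially; the only thing to check is the descent at position $\alpha_t$. Here the hypothesis $n>\alpha_t$ (together with $m\ge 1$) guarantees that at most $m$ copies of $1$ lie among the first $\alpha_t$ entries of $w'$, so at least one copy of $1$ remains in the tail, forcing $w'_{\alpha_t+1}=1<w_{\alpha_t+1}\le w_{\alpha_t}=w'_{\alpha_t}$. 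Contrary to your remark, the hypothesis is genuinely used: it is what ensures the injection is well-defined in the first place and that a fresh copy of $1$ is available to sit at position $\alpha_t+1$.
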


\begin{proof}
By determining the first $\alpha_{t}$ elements of the permutation, we determine the entire permutation, because the rest of the elements are in weakly increasing order. We juxtapose each permutation $w\in S_{n}^{(m)}$ with a permutation $w^{\prime}\in S_{n}^{(m+1)}$, by using the first $\alpha_{t}$ elements of $w$ in $w^{\prime}$. Thus, we preserve the descent set $I$ everywhere except possibly at position $\alpha_{t}$. We want to compare $w^{\prime}_{\alpha_{t}}$ and $w^{\prime}_{\alpha_{t}+1}$ ($\alpha_{t}$-th and $(\alpha_{t}+1)$-th elements in $w^{\prime}$). In the first $\alpha_{t}$ elements of $w^{\prime}$, the number $1$ appears at most $m$ times, so there is at least one $1$ in the rest of the elements. The rest of the elements are in weakly increasing order, so $w^{\prime}_{\alpha_{t}+1}=1$. This leads us to:
\[w^{\prime}_{\alpha_{t}}=w_{\alpha_{t}}>w_{\alpha_{t}+1}\geq 1=w^{\prime}_{\alpha_{t}+1}.\]

Because we can juxtapose each permutation in $S_{n}^{(m)}$ with a different permutation in $S_{n}^{(m+1)}$, it follows that
$\sd^{m}(I,n)\leq \sd^{m+1}(I,n)$.
\end{proof}

\begin{prop}\label{prop3}
The function $\sd^{m}(I,n)$ stabilizes for $m\geq\alpha_{t}-t+1$:
\[
\sd^{\alpha_{t}-t+1}(I,n) = \sd^{\alpha_{t}-t+2}(I,n) = \sd^{\alpha_{t}-t+3}(I,n) = \cdots.
\]
\end{prop}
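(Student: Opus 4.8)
The plan is to show that once $m \ge \alpha_t - t + 1$, every permutation $w' \in S_n^{(m+1)}$ with $\Des(w') = I$ can be matched to a permutation $w \in S_n^{(m)}$ with $\Des(w) = I$, giving $\sd^{m+1}(I,n) \le \sd^m(I,n)$; combined with Proposition~\ref{prop2} this forces equality. As in the proof of Proposition~\ref{prop2}, the whole permutation is determined by its first $\alpha_t$ entries, since the remaining entries are weakly increasing. So the task reduces to a counting statement about the prefix $(w_1, \ldots, w_{\alpha_t})$: I want to argue that when $m$ is this large, no entry of an admissible prefix ever needs to be repeated more than $m$ times, i.e. the prefix of any $w' \in S_n^{(m+1)}$ with $\Des(w') = I$ already uses each value at most $m$ times, hence is itself a legal prefix for $S_n^{(m)}$.

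The key combinatorial estimate is the following. Look at the descent positions $\alpha_1 < \cdots < \alpha_t$ inside $\{1,\ldots,\alpha_t\}$. Between consecutive descents the prefix is weakly increasing, so the prefix $(w_1,\ldots,w_{\alpha_t})$ splits into $t$ maximal weakly increasing runs (the run boundaries are exactly the descents), each run ending at some $\alpha_i$. Within a single weakly increasing run a fixed value $v$ can of course appear many times, but here is the point: consider how many times a value $v$ can occur in positions $1,\ldots,\alpha_t$. I would argue that if some value occurred $m+1$ times in the prefix with $m \ge \alpha_t - t + 1$, then — counting the $\alpha_t$ prefix positions against the $t$ runs — one is forced into a contradiction with $\Des(w')=I$: essentially, accommodating $m+1 \ge \alpha_t - t + 2$ equal entries forces them to lie across run boundaries where a descent is required, or forces the total length of the prefix to exceed $\alpha_t$. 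More precisely, the prefix has $\alpha_t$ cells distributed among $t$ runs, so the average run length is $\alpha_t/t$, and a value repeated many times must be "spread thin"; the bound $m \ge \alpha_t - t + 1$ is exactly the threshold at which $m$ copies already suffice — any configuration realizable with $m+1$ copies of some value is realizable with at most $m$ copies of each.

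Concretely I would set up the matching $w' \mapsto w$ by: take the prefix of $w'$, observe it uses each value at most $m$ times (the estimate above), declare that to be the prefix of $w$, and complete $w$ to an element of $S_n^{(m)}$ by appending the remaining $n m - \alpha_t$ values in weakly increasing order; one checks this is well-defined (the remaining multiplicities are nonnegative because the prefix uses $\le m$ of each value) and that $\Des(w) = I$ (the descent at $\alpha_t$ is preserved because, exactly as in Proposition~\ref{prop2}'s argument, $w_{\alpha_t+1} = 1$ as long as the prefix does not use all $m$ copies of $1$ — and if it does, a small separate argument handles that boundary case, or one notes $\alpha_t \notin \Des$ cannot happen since $\alpha_t \in I$). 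Injectivity is immediate since distinct prefixes give distinct $w$. The main obstacle is proving the counting estimate cleanly — pinning down exactly why $m \ge \alpha_t - t + 1$ is the right threshold and not, say, off by one; I expect this comes down to a careful pigeonhole on the $t$ weakly increasing runs filling $\alpha_t$ positions, possibly phrased via the ribbon-tableau picture of Remark~\ref{rem2}, where the run lengths are the row lengths of the ribbon and the claim becomes a statement about how a semistandard filling of a ribbon with $\alpha_t$ cells and $t$ rows can repeat a single value.
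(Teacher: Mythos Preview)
Your approach rests on the same key estimate as the paper's: in any admissible prefix $(w_1,\ldots,w_{\alpha_t})$, every value occurs at most $\alpha_t-t+1$ times (and the value $1$ at most $\alpha_t-t$ times). The paper is slightly more direct than your injection framing: rather than matching $S_n^{(m+1)}$ into $S_n^{(m)}$, it simply observes that once $m\ge\alpha_t-t+1$ the set of admissible prefixes --- sequences $v=(v_1,\ldots,v_{\alpha_t})$ with $\Des(v)=I^{-}$, entries in $\{1,\ldots,n\}$, and $v_{\alpha_t}\ne 1$ --- is the same set regardless of $m$, so $\sd^m(I,n)$ equals its cardinality. Your injection is correct but an unnecessary detour.

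The estimate you flag as ``the main obstacle'' has the one-line proof you suspected from the ribbon picture. The prefix, having descent set $I^{-}$ of size $t-1$, is a ribbon with $t$ rows and hence $\alpha_t-(t-1)=\alpha_t-t+1$ columns; in a semistandard filling each column is strictly increasing, so no value can appear in more than $\alpha_t-t+1$ cells. For the value $1$ the paper notes the sharper bound directly: since $w_{\alpha_i}>w_{\alpha_i+1}\ge 1$ for every $i=1,\ldots,t$, the $t$ positions $\alpha_1,\ldots,\alpha_t$ are all $\ne 1$, so $1$ occurs at most $\alpha_t-t$ times in the prefix. This last point also dissolves your ``boundary case'': since $\alpha_t-t<\alpha_t-t+1\le m$, the prefix never exhausts the supply of $1$'s, and $w_{\alpha_t+1}=1$ is automatic --- no separate argument is needed.
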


\begin{proof}
To determine the entire permutation, it is enough to determine the first $\alpha_{t}$ elements of the permutation, because the rest of the elements are in weakly increasing order. Let $m\geq\alpha_{t}-t+1$. The number $1$ appears at most $\alpha_{t}-t$ times in the first $\alpha_{t}$ elements, because $w_{\alpha_{i}}\neq 1$ for $i: \ 1\leq i\leq t$. So, the number $1$ appears at least once in the rest of the elements, which means that $w_{\alpha_{t}+1}=1$. Also, we can use each number from $1$ to $n$ as many times as we want in the first $\alpha_{t}$ elements, because the number $1$ appears at most $\alpha_{t}-t$ times and each number from $2$ to $n$ appears at most $\alpha_{t}-t+1$ times there. Therefore, when $m\geq \alpha_{t}-t+1$, the value of $\sd^{m}(I,n)$ is equal to the number of sequences $v=(v_{1}, v_{2}, \ldots, v_{\alpha_{t}})$, which satisfy the conditions:
\begin{itemize}
    \item $\Des(v)=I^{-}$,
    \item $v_{i}\in\{1,2,\ldots,n\}$,
    \item $v_{\alpha_{t}}\neq 1$.
\end{itemize} 

So, for $m\geq \alpha_{t}-t+1$, the value of $\sd^{m}(I,n)$ does not depend on $m$, which means that for $m\geq \alpha_{t}-t+1$, the function $\sd^{m}(I,n)$ stabilizes.  
\end{proof}

In Proposition \ref{prop3} we find that the function $\sd^{m}(I,n)$ stabilizes from some point onward. However, we do not establish the stabilization point: we only give an upper bound for it. In the next proposition, we derive the exact value of the stabilization point.  

\begin{prop}Let the set $I$ be non-empty and $L$ be the length of the longest sequence of consecutive numbers in $I$. For $n\geq L+1$, the stabilization point of the function $\sd^{m}(I,n)$ is $M=\alpha_{t}-t+1$. 
\end{prop}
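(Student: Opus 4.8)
The plan is to obtain the statement by combining Propositions~\ref{prop2} and~\ref{prop3}. Put $M:=\alpha_{t}-t+1$. Proposition~\ref{prop2} says the sequence $\sd^{1}(I,n)\le\sd^{2}(I,n)\le\cdots$ is weakly increasing, and Proposition~\ref{prop3} says $\sd^{M}(I,n)=\sd^{M+1}(I,n)=\cdots$, so the genuine stabilization point is at most $M$; by the monotonicity it equals $M$ if and only if $\sd^{M-1}(I,n)<\sd^{M}(I,n)$. If $M=1$ --- that is, $I=\{1,2,\dots,t\}$ --- there is nothing to prove, since $1$ is the least positive integer. So I assume $M\ge2$, and the task reduces to establishing the strict inequality $\sd^{M-1}(I,n)<\sd^{M}(I,n)$.

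The inequality ``$\le$'' is already the content of the injection built in the proof of Proposition~\ref{prop2}: it sends a permutation $w\in S_{n}^{(M-1)}$ with $\Des(w)=I$ to the $w'\in S_{n}^{(M)}$ that keeps the first $\alpha_{t}$ entries of $w$ and appends the leftover copies in weakly increasing order; it is injective and preserves $\Des=I$ (this works in our range because value $1$ occupies at most $\alpha_{t}-t=M-1$ of the first $\alpha_{t}$ entries, so it still has an unused copy for the tail). Hence it is enough to produce a single $w'\in S_{n}^{(M)}$ with $\Des(w')=I$ that is not in the image of this map.

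I would construct such a $w'$ by prescribing its first $\alpha_{t}$ entries $v=(v_{1},\dots,v_{\alpha_{t}})$: set $v_{i}=1$ for every $i\in\{1,\dots,\alpha_{t}\}\setminus I$, and on each maximal sequence $\{\beta,\beta+1,\dots,\beta+r-1\}$ of consecutive integers in $I$ put the values $2,3,\dots,r+1$ at the positions $\beta+r-1,\beta+r-2,\dots,\beta$, so that each such sequence ends on its right with the value $2$. Because every such sequence has $r\le L$, all entries lie in $\{1,\dots,L+1\}\subseteq\{1,\dots,n\}$, and this is the only place the hypothesis $n\ge L+1$ is used. Then I would verify the bookkeeping facts: $\Des(v)=I^{-}$ (at a position of $I^{-}$ the entry is $\ge2$ and strictly exceeds its right neighbour --- which is either the $1$ immediately after such a sequence, or the next larger value inside it --- whereas at a position outside $I$ the entry $1$ begins no descent); value $1$ appears exactly $\alpha_{t}-t=M-1$ times; and every value $\ge2$ appears at most once per such maximal sequence, hence at most (number of maximal sequences) $\le M$ times (the last bound from $\alpha_{t}\ge\alpha_{1}+t+(\text{number of maximal sequences})-2$ together with $\alpha_{1}\ge1$). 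So no value has multiplicity above $M$, $v$ is the prefix of a well-defined $w'\in S_{n}^{(M)}$, and since value $1$ still has a leftover copy the appended tail starts with $1<2=v_{\alpha_{t}}$, which makes $\Des(w')=I^{-}\cup\{\alpha_{t}\}=I$.

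Finally I would show $w'$ is outside the image of the Proposition~\ref{prop2} injection: if $w'$ equalled the image of some $u\in S_{n}^{(M-1)}$ with $\Des(u)=I$, then $u$ would have the same first $\alpha_{t}$ entries $v$; but $v$ uses all $M-1$ copies of value $1$ present in $S_{n}^{(M-1)}$, so $u$ has no $1$ after position $\alpha_{t}$, forcing $u_{\alpha_{t}+1}\ge2=v_{\alpha_{t}}=u_{\alpha_{t}}$ (or $u=v$), and either case contradicts $\alpha_{t}\in\Des(u)$. Thus $\sd^{M}(I,n)\ge\sd^{M-1}(I,n)+1$, so the stabilization point is exactly $M$. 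I expect the only real obstacle to be the design of $v$ --- the trick is to saturate the multiplicity of a value that is already forbidden on $I$ (namely $1$) while still realizing the descent at $\alpha_{t}$ through a different small value (namely $2$) --- the remaining checks, that $\Des(v)=I^{-}$ and that no value repeats more than $M$ times, being routine but sensitive at the boundaries between the maximal sequences and at position $\alpha_{t}$.
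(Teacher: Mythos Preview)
Your proof is correct and follows the same overall architecture as the paper: both reduce to exhibiting a single permutation counted by $\sd^{M}(I,n)$ that is not hit by the injection from Proposition~\ref{prop2}, thereby forcing the strict inequality $\sd^{M-1}(I,n)<\sd^{M}(I,n)$.

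The difference lies in the witness. The paper builds its example with \emph{large} values: every horizontal segment of the ribbon is filled with $n$, and each column carries $n,n-1,n-2,\ldots$; this makes the value $n$ occur exactly $\alpha_{t}-(t-1)=M$ times among the first $\alpha_{t}$ positions, so the prefix is outright impossible in $S_{n}^{(M-1)}$ because the multiplicity of $n$ already exceeds $M-1$. Your construction is essentially the mirror image using \emph{small} values: you fill the non-$I$ positions with $1$'s and each maximal block of $I$ with $r+1,r,\ldots,2$. Here the saturated value is $1$, and it appears $M-1$ times rather than $M$, so your prefix may well be legal in $S_{n}^{(M-1)}$; you then need the extra (short) argument that any completion in $S_{n}^{(M-1)}$ has no $1$ left for position $\alpha_{t}+1$, hence fails to have a descent at $\alpha_{t}$. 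Both approaches use the hypothesis $n\ge L+1$ in the same way, to keep the column entries inside $\{1,\ldots,n\}$.

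What the paper's route buys is a one-line exclusion (``$n$ occurs $M$ times, done''), avoiding the completion analysis. What your route buys is that the witness lives in $\{1,\ldots,L+1\}$ independently of $n$, and you explicitly handle the boundary case $M=1$ and the multiplicity bound for the non-$1$ values via the pleasant inequality $p\le M$ on the number of maximal blocks --- details the paper's picture-based argument leaves implicit.
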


\begin{proof}
To prove this it is sufficient to prove that $\sd^{\alpha_{t}-t+1}(I,n)>\sd^{\alpha_{t}-t}(I,n)$. From the proof of Proposition \ref{prop2} we know that each permutation in $S_{n}^{(\alpha_{t}-t)}$ corresponds to a permutation in $S_{n}^{(\alpha_{t}-t+1)}$, which has the same descent set. We want to show the existence of a permutation in $S_{n}^{(\alpha_{t}-t+1)}$ that has not been paired with any permutation in $S_{n}^{(\alpha_{t}-t)}$. Recall that the correspondence between the permutations in Proposition \ref{prop2} is achieved by using the same first $\alpha_{t}$ elements. So, it is enough to give an example of a permutation, which has $\alpha_{t}-t+1$ copies of some element in the first $\alpha_{t}$ elements of the permutations. To give an example of the first $\alpha_{t}$ elements of such permutation we use semistandard Young tableaux of ribbon shape (cf. Remark \ref{rem2}).

\begin{figure}[h]
    \centering
    \includegraphics[scale=0.7]{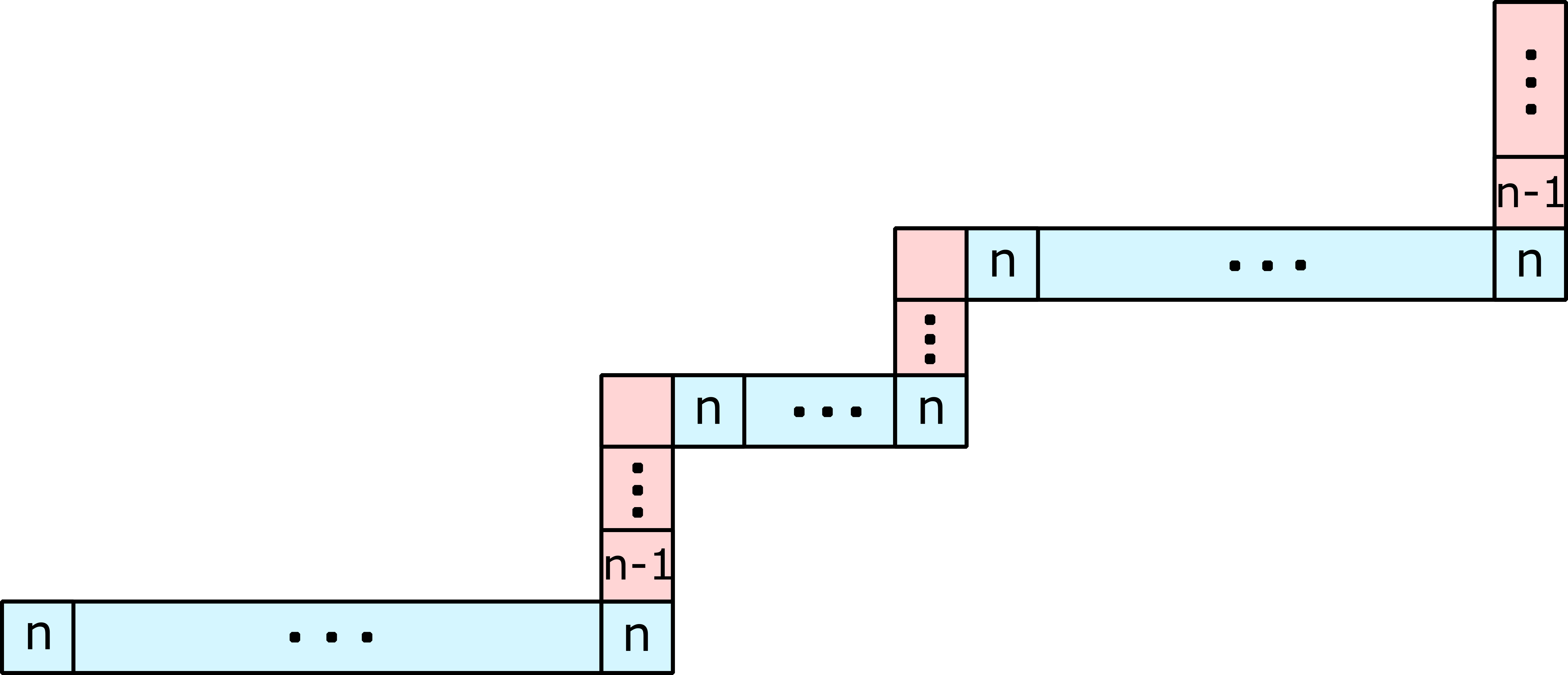}
    \caption{A permutation having $\alpha_{t}-t+1$ copies of $n$ in the first $\alpha_{t}$ elements.}
    \label{fig3}
\end{figure}

The example in Figure \ref{fig3} has elements equal to $n$ in all blue parts and in the columns, there are decreasing sequences of consecutive numbers starting from $n$. Because $n\geq L+1$, the smallest number in each column is always a positive integer. In the last column, the length of the pink part is less than $L$, which means that the last number is bigger than $1$. From Proposition $\ref{prop3}$ we know that $w_{\alpha_{t}+1}=1$, so the position $\alpha_{t}$ is in the descent set as well. We have $t-1$ squares in the pink part. Therefore, we have exactly $\alpha_{t}-t+1$ squares in the blue part, so we have exactly $\alpha_{t}-t+1$ copies of the number $n$.
\end{proof}

\section{Polynomiality}\label{pol}

In this section, we first introduce the function $D^{m}(I,n)$. We prove that $D^{m}(I,n)$ is a polynomial in $n$ for sufficiently large $n$. Then, we establish a relation between $D^{m}(I,n)$ and $\sd^{m}(I,n)$. Using this relation, we deduce that $\sd^{m}(I,n)$ is also a polynomial in $n$ for sufficiently large $n$.

\begin{definition}
For the non-empty set $I=\{\alpha_{1},\alpha_{2},\ldots,\alpha_{t}\}$, we define $D^{m}(I,n)$ to equal the number of sequences $v=(v_{1},v_{2},\ldots, v_{\alpha_{t}})$, which satisfy the conditions:
\begin{itemize}
    \item $\Des(v)=I^{-}$,
    \item $v_{i}\in\{1,2,\ldots,n\}$,
    \item each number from the set $\{1,2,\ldots,n\}$ appears at most $m$ times in $v$.
\end{itemize}
\end{definition}

\begin{definition}\label{def2}
Let the set $I=\{\alpha_{1},\alpha_{2},\ldots,\alpha_{t}\}$ be non-empty. Let $A = (a_{1}, a_{2}, \ldots, a_{r}) \in\Comp(\alpha_{t},m)$ be a composition. By $C(A,I)$, we denote the number of sequences $v=(v_{1}, v_{2}, \ldots, v_{\alpha_{t}})$, which satisfy the following conditions:
\begin{itemize}
    \item $\Des(v)=I^{-}$,
    \item $v_{i}\in\{1,2,\ldots,r\}$,
    \item for each $j\in\{1,2,\ldots,r\}$, the number $j$ appears exactly $a_{j}$ times in $v$.
\end{itemize}
\end{definition}

\begin{remark}\label{rem1}
Notice that the value of $C(A,I)$ does not change if we replace the set $\{1,2,3,\dots,r\}$, from which we choose the elements of $v$,  with another set of $r$ elements as long as the smallest number in the set appears $a_{1}$ times, the next in size appears  $a_{2}$ times, and so on.
\end{remark}

\begin{lemma}\label{lem2}
For $n\geq\alpha_{t}$, the function $D^{m}(I,n)$ is a polynomial in $n$.
\end{lemma}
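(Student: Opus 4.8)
My plan is to express $D^{m}(I,n)$ as a finite sum of the quantities $C(A,I)$ from Definition~\ref{def2}, grouped by how many distinct values actually occur in the sequence $v$ and how often each occurs, and then to show that the contribution from each fixed ``value profile'' is a polynomial in $n$ because it amounts to counting the number of ways to choose which elements of $\{1,\dots,n\}$ play which role. Concretely, a sequence $v=(v_1,\dots,v_{\alpha_t})$ counted by $D^m(I,n)$ uses some number $r\le\alpha_t$ of distinct values, say $u_1<u_2<\cdots<u_r$ in $\{1,\dots,n\}$, with $u_j$ occurring $b_j$ times for a composition $(b_1,\dots,b_r)$ of $\alpha_t$ with all parts $\le m$. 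By Remark~\ref{rem1}, the number of such $v$ with $\Des(v)=I^{-}$ depends only on the composition $(b_1,\dots,b_r)$ and equals $C((b_1,\dots,b_r),I)$, and this is independent of which $r$-element subset $\{u_1<\cdots<u_r\}$ of $\{1,\dots,n\}$ we picked. Hence
\[
D^{m}(I,n)=\sum_{r=1}^{\alpha_t}\ \binom{n}{r}\sum_{\substack{A\in\Comp(\alpha_t,m)\\ A\text{ has }r\text{ parts}}}C(A,I).
\]

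The inner sum over compositions $A$ is a constant that does not depend on $n$ (the set $\Comp(\alpha_t,m)$ is finite and $C(A,I)$ is a fixed nonnegative integer for each $A$), so this displays $D^{m}(I,n)$ as an explicit $\mathbb{Z}$-linear combination of the binomial coefficients $\binom{n}{1},\binom{n}{2},\dots,\binom{n}{\alpha_t}$. Each $\binom{n}{r}$ is a polynomial in $n$ of degree $r$, so $D^{m}(I,n)$ is a polynomial in $n$ (of degree at most $\alpha_t$). The one point that needs care — and this is where the hypothesis $n\ge\alpha_t$ enters — is that the counting identity above is only valid once $n$ is large enough that \emph{every} admissible value profile can actually be realized inside $\{1,\dots,n\}$: a sequence of length $\alpha_t$ uses at most $\alpha_t$ distinct values, so for $n\ge\alpha_t$ the binomial $\binom{n}{r}$ correctly counts the choices of the underlying value set for every $r$ occurring in the sum, and no sequence is missed or miscounted. (For smaller $n$ the same sum over-counts, since $\binom{n}{r}=0$ would be needed but the formula as a polynomial does not vanish; this is exactly the usual ``eventually polynomial'' phenomenon for the classical descent polynomial.)

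The main obstacle is not the algebra but making the bijective bookkeeping airtight: one must check that the map sending $v$ to the pair (its set of distinct values, the induced sequence on the relative order of those values) is a bijection onto $\{(U,v')\,:\,U\subseteq\{1,\dots,n\},\ |U|=r,\ v'\text{ counted by }C(A,I)\text{ for the appropriate }A\}$, and in particular that the descent set is genuinely preserved under the order-preserving relabelling — which is immediate since $\Des$ depends only on the relative order of consecutive entries, and that is precisely the content of Remark~\ref{rem1}. Once that is stated cleanly, the lemma follows. I would also remark that this same decomposition will later be the natural starting point for the explicit formula, since summing $\binom{n}{r}$ against the composition data is exactly the shape of the formula for $d^\infty(I,n)$ quoted in the introduction.
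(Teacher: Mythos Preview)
Your proof is correct and follows essentially the same approach as the paper: both decompose $D^{m}(I,n)$ as $\sum_{A\in\Comp(\alpha_t,m)} C(A,I)\binom{n}{r}$ (with $r$ the length of $A$), invoke Remark~\ref{rem1} to justify the factor $\binom{n}{r}$, and conclude polynomiality since each $C(A,I)$ and the index set of the sum are independent of $n$. One small quibble: your explanation of where $n\ge\alpha_t$ enters is slightly off---the counting identity in fact holds for all $n\ge 1$ (since $\binom{n}{r}=0$ for integer $n<r$), so the hypothesis is not actually needed for the identity or for polynomiality, and the paper's justification on this point is equally loose.
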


\begin{proof}
From Definition \ref{def2}, we know that 
\[
D^{m}(I,n)=\sum\limits_{A\in \Comp(\alpha_{t},m)} C(A,I)\binom{n}{r},
\]
where $r$ is the length of the composition $A$ and we sum over all compositions in $\Comp(\alpha_{t},m)$.

We multiply $C(A,I)$ by $\binom{n}{r}$, because as we noted in Remark \ref{rem1}, the set $\{1,2,3,\dots,r\}$ can be replaced with any set with $r$ elements. Because we have the numbers from $1$ to $n$, there are $\binom{n}{r}$ ways to choose $r$ of them to form a set.

Let $n\geq\alpha_{t}$. Because $r$ is the length of the composition $A\in\Comp(\alpha_{t},m)$, it follows that $\alpha_{t}\geq r$. We obtain  $n\geq\alpha_{t}\geq r$, which means that $\binom{n}{r}$ is a polynomial in $n$. Also, $C(A,I)$ and the number of elements in the sum are both determined only by $m$ and $I$. Therefore, $D^{m}(I,n)$ is polynomial in $n$ for $n\geq\alpha_{t}$. 
\end{proof}

\begin{theorem}\label{theo5}
For any positive integer $m$, the function $\sd^{m}(I,n)$ is a polynomial in $n$, for $n\geq\alpha_{t}$.
\end{theorem}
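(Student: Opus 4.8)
The plan is to reduce $\sd^{m}(I,n)$ to the already-understood function $D^{m}(I,n)$ via the identity
\[
\sd^{m}(I,n) = D^{m}(I,n) - \sd^{m}(I^{-},n) \qquad (n\geq\alpha_{t}),
\]
valid whenever $I$ is non-empty, and then to induct on $t=|I|$: Lemma~\ref{lem2} supplies polynomiality of $D^{m}(I,n)$ for $n\geq\alpha_{t}$, and the inductive hypothesis supplies polynomiality of $\sd^{m}(I^{-},n)$, so the difference is a polynomial for $n\geq\alpha_{t}$.

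To prove the identity I would use the prefix description already exploited in Propositions~\ref{prop2} and~\ref{prop3}. Fix $w\in S_{n}^{(m)}$ with $\Des(w)\subseteq I$. Since $\alpha_{t}=\max(I)$, no position $\geq\alpha_{t}+1$ is a descent, so the suffix $(w_{\alpha_{t}+1},w_{\alpha_{t}+2},\dots)$ is weakly increasing; it is therefore the unique weakly increasing arrangement of the multiset left after deleting the first $\alpha_{t}$ entries, and $w$ is determined by its prefix $v=(w_{1},\dots,w_{\alpha_{t}})$. Conversely, a length-$\alpha_{t}$ tuple $v$ over $\{1,\dots,n\}$ in which each value occurs at most $m$ times extends, by appending the remaining multiset in weakly increasing order, to a unique $w\in S_{n}^{(m)}$; and for such a $v$ one checks $\Des(w)\subseteq I$ exactly when $\Des(v)=I^{-}$, in which case $\Des(w)$ equals $I$ if position $\alpha_{t}$ is a descent of $w$ and equals $I^{-}$ otherwise. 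Thus the tuples counted by $D^{m}(I,n)$ biject with $\{w\in S_{n}^{(m)}:\Des(w)\subseteq I\}=\{w:\Des(w)=I\}\sqcup\{w:\Des(w)=I^{-}\}$, which gives $D^{m}(I,n)=\sd^{m}(I,n)+\sd^{m}(I^{-},n)$.

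The induction is then routine. For $t=0$ we have $\sd^{m}(\emptyset,n)=1$, a constant polynomial. For $t\geq 1$, note $\max(I^{-})=\alpha_{t-1}<\alpha_{t}$, so by the inductive hypothesis $\sd^{m}(I^{-},n)$ is a polynomial in $n$ for $n\geq\alpha_{t-1}$, hence in particular for $n\geq\alpha_{t}$; combining with Lemma~\ref{lem2} and the identity, $\sd^{m}(I,n)=D^{m}(I,n)-\sd^{m}(I^{-},n)$ is a polynomial in $n$ for $n\geq\alpha_{t}$.

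I expect the main obstacle to be the identity rather than the induction — specifically, checking that the weakly increasing completion of a prefix introduces no descent beyond position $\alpha_{t}$, that the prefixes arising this way are exactly the tuples enumerated by $D^{m}(I,n)$, and that the boundary case $nm=\alpha_{t}$ (which can occur only for $m=1$ and $n=\alpha_{t}$, where $\sd^{1}(I,\alpha_{t})=0$ and $D^{1}(I,\alpha_{t})=\sd^{1}(I^{-},\alpha_{t})$ since every value then occurs at most once) is consistent with the identity. None of this is difficult, but it is where the bookkeeping lives.
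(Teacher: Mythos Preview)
Your approach is exactly the paper's: induct on $t=|I|$, establish the identity $D^{m}(I,n)=\sd^{m}(I,n)+\sd^{m}(I^{-},n)$ via the prefix description, and invoke Lemma~\ref{lem2}. One slip in your write-up: the set equality $\{w:\Des(w)\subseteq I\}=\{w:\Des(w)=I\}\sqcup\{w:\Des(w)=I^{-}\}$ is false (the left side contains every $w$ whose descent set is \emph{any} subset of $I$), and correspondingly the biconditional ``$\Des(w)\subseteq I$ exactly when $\Des(v)=I^{-}$'' fails --- take $\Des(v)=\varnothing$ when $I^{-}\neq\varnothing$. What you actually need, and what your argument in fact shows, is that prefixes $v$ with $\Des(v)=I^{-}$ biject with $\{w:\Des(w)\in\{I,I^{-}\}\}$; with that correction your proof matches the paper's.
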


\begin{proof}

To prove this we use induction on $t$ --- the size of $I$.

When $t=0$, we get that $I=\varnothing$ and $\sd^{m}(n,\varnothing)=1$.

Let $w=w_{1}w_{2}\ldots w_{nm}$ be a permutation in $S_{n}^{(m)}$. By determining the first $\alpha_{t}$ elements of the permutation, we determine the entire permutation, because the rest of the elements are in weakly increasing order. There are $D^{m}(I,n)$ ways to define the first $\alpha_{t}$ elements. However, by defining the first $\alpha_{t}$ elements we do not know what is happening between $w_{\alpha_{t}}$ and $w_{\alpha_{t}+1}$. If $w_{\alpha_{t}}>w_{\alpha_{t}+1}$ we get that $\Des(w)=I$. If $w_{\alpha_{t}}\leq w_{\alpha_{t}+1}$ we get that $\Des(w)=I^{-}$. So, we obtain the equation
$$D^{m}(I,n)=\sd^{m}(I,n)+\sd^{m}(I^{-},n).$$

From the induction hypothesis we get that $\sd^{m}(I^{-},n)$ is a polynomial in $n$ for $n\geq\alpha_{t-1}$. Using Lemma \ref{lem2} we get that $D^{m}(I,n)$ is a polynomial in $n$ for $n\geq\alpha_{t}$. Therefore, $\sd^{m}(I,n)$ is also a polynomial in $n$ for $n\geq\alpha_{t}$, which finishes the induction.
\end{proof}

\section{Polynomiality using the Jacobi-Trudi identity}\label{jac}

In this section, we give an alternative proof for the fact that $\sd^{m}(I,n)$ is a polynomial in $n$ for sufficiently large $n$. To do this we use the Jacobi-Trudi identity. This proof is easier and more elegant, but we need some background in symmetric functions to understand it. For this reason, we first start with a brief introduction to this field.

\subsection{Symmetric functions}

Here we introduce the theory we need for using the Jacobi-Trudi identity. To do this, we follow Stanley's  book \textit{Enumerative Combinatorics, vol. 2} \cite{stanley1986enumerative}.

\begin{definition}
Let $\lambda=[\lambda_{1},\lambda_{2},\ldots,\lambda_{s},0,0,\ldots]$ be a partition of a positive integer $t$. We define the \textit{Monomial symmetric function}
\[
m_{\lambda}(x_{1},x_{2},x_{3}\ldots)=\sum\limits_{\alpha}x^{\alpha}=\sum\limits_{\alpha}\prod\limits_{i=1}^{\infty}x_{i}^{\alpha_{i}}, 
\]
where the sum is over all distinct permutations $\alpha=(\alpha_{1},\alpha_{2},\alpha_{3},\ldots)$ of $\lambda$. 
\end{definition}

\begin{definition}
Let $t$ be a positive integer. We define the \textit{Complete homogeneous symmetric function} 
\[
h_{t}(x_{1},x_{2},x_{3}\ldots)=\sum\limits_{i_{1}\leq i_{2}\leq\cdots\leq i_{t}} x_{i_{1}} x_{i_{2}}\cdots x_{i_{t}} = \sum\limits_{\lambda\vdash t} m_{\lambda},
\]
where the sum is over all partitions $\lambda$ of $t$.
\end{definition}

\begin{definition}
Let $\lambda/\mu$ be a skew shape. We define the \textit{skew Schur function}
\[
s_{\lambda/\mu}(x_{1},x_{2},x_{3}\ldots) = \sum\limits_{T} x^{T} = \sum\limits_{T}\prod\limits_{i=1}^{\infty}x_{i}^{t_{i}}, 
\]
where the sum is over all semistandard Young tableaux $T$ of shape $\lambda/\mu$ and $t_{i}$ is equal to the occurrences of the number $i$ in $T$. 
\end{definition}

\begin{remark}\label{rem1}
Recall from Remark $\ref{rem2}$ that the permutations we are considering can be visualized by semistandard Young tableaux of ribbon shape. Ribbon shapes are skew shapes, so they can be represented as a difference $\lambda/\mu$ of two partitions. Therefore, $\sd^{m}(I,n)$ is equal to the coefficient in front of $x_{1}^{m}x_{2}^{m}\cdots x_{n}^m$ in  $s_{\lambda/\mu}$.
\end{remark}

\subsection{The Jacobi-Trudi identity proof}
Here we first introduce the Jacobi-Trudi identity. Then we use it to give a second proof that for any $m$, the function $\sd^{m}(I,n)$ is a polynomial in $n$ for sufficiently large $n$.

\begin{theorem}[Jacobi-Trudi identity, Theorem 7.6.1 \cite{stanley1986enumerative}]\label{theo3}
Let $\lambda/\mu$ be a skew shape and let $k$ be the number of non-zero elements in $\lambda$. Then
\[
s_{\lambda/\mu}=\det \left[h_{\lambda_{i}-\mu_{j}-i+j}\right]_{i,j=1}^{k},
\]
where $h_{i}=0$ for $i<0$ and $h_{0}=1$.
\end{theorem}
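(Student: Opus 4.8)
The approach I would take is the classical \emph{Lindström–Gessel–Viennot} proof by non-intersecting lattice paths. Since a symmetric function of a fixed degree is determined by its specialization to finitely many variables, and both sides are homogeneous of degree $|\lambda|-|\mu|$, it suffices to fix $N$ and prove
\[
\det\bigl[h_{\lambda_i-\mu_j-i+j}(x_1,\dots,x_N)\bigr]_{i,j=1}^{k}=s_{\lambda/\mu}(x_1,\dots,x_N),
\]
where $\mu$ is padded with zeros to length $k$ and $h_a=0$ for $a<0$. Letting $N\to\infty$ afterwards yields the stated identity.

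First I would encode $h$ by monotone lattice paths in $\mathbb{Z}^2$ (steps $(1,0)$ and $(0,1)$), weighting a horizontal step from $(x,y)$ to $(x+1,y)$ by $x_y$ and a path by the product of its step weights. Put start points $B_j=(\mu_j-j,\,1)$ and end points $A_i=(\lambda_i-i,\,N)$. A monotone path from $B_j$ to $A_i$ has exactly $\lambda_i-\mu_j-i+j$ horizontal steps, whose heights form a weakly increasing sequence in $\{1,\dots,N\}$; hence the generating function of all such paths is exactly $h_{\lambda_i-\mu_j-i+j}(x_1,\dots,x_N)$, and this also agrees with the conventions $h_{<0}=0$ (there is no monotone path when $A_i$ lies strictly west of $B_j$) and $h_0=1$.

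Then I would expand the determinant over the symmetric group, $\det[\cdot]=\sum_{\pi\in S_k}\mathrm{sgn}(\pi)\prod_{i=1}^{k}h_{\lambda_i-\mu_{\pi(i)}-i+\pi(i)}$, reading each summand as the signed weighted count of $k$-tuples of paths, the $i$-th from $B_{\pi(i)}$ to $A_i$. The standard sign-reversing involution — take the two paths that first meet and exchange their tails — cancels every tuple containing an intersection. Because $\lambda$ and $\mu$ are partitions, the coordinates $(\lambda_i-i)_i$ and $(\mu_j-j)_j$ are \emph{strictly} decreasing, so the only permutation admitting a non-intersecting tuple is the identity; thus all surviving terms have sign $+1$ and the determinant equals the unsigned weighted count of non-intersecting families of paths $B_i\to A_i$, $i=1,\dots,k$. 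Finally, such a family corresponds bijectively to a semistandard Young tableau $T$ of shape $\lambda/\mu$ with entries in $\{1,\dots,N\}$: the heights of the horizontal steps of the $i$-th path, read in order, are the entries of row $i$ (the $\mu_i$-cell offset built into $B_i$ accounts for the empty cells), weak increase along a path gives weak increase along rows, and non-intersection is precisely strict increase down columns, so the weight of the family is $x^{T}$. Summing over all such $T$ gives $s_{\lambda/\mu}(x_1,\dots,x_N)$, and then $N\to\infty$ finishes the proof.

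I expect the two delicate points to be, first, verifying that the partition hypothesis genuinely forces the identity permutation in every non-intersecting family (this is exactly where one needs the strict decrease of $\lambda_i-i$ and $\mu_j-j$), and second, checking the path–tableau dictionary carefully enough to see that ``non-intersecting'' matches column-strictness and that the weights correspond on the nose; everything else is bookkeeping.
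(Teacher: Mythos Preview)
Your proposal is the standard Lindstr\"om--Gessel--Viennot argument and is correct as sketched; the only caveats you already flagged (that strict decrease of $\lambda_i-i$ and $\mu_j-j$ forces the identity permutation, and that non-intersection corresponds exactly to column-strictness) are the genuine content, and both hold.

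As for comparison with the paper: the paper does \emph{not} prove Theorem~\ref{theo3} at all. Immediately after the statement it says ``We do not give proof of the Jacobi--Trudi identity because we want to keep this paper concise,'' and simply refers the reader to Stanley's \emph{Enumerative Combinatorics, vol.~2}. The proof there (of Theorem~7.16.1) is precisely the non-intersecting lattice path argument you outline, so your approach coincides with the one the paper points to.
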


We do not give proof of the Jacobi-Trudi identity because we want to keep this paper concise. A proof of it can be seen in Richard P. Stanley's  book \textit{Enumerative Combinatorics} \cite{stanley1986enumerative}.

\begin{theorem}\label{theo4}
For any $m$, the function $\sd^{m}(I,n)$ is a polynomial in $n$ for sufficiently large $n$.
\end{theorem}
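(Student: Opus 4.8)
The plan is to combine Remark~\ref{rem1} (the Schur-function interpretation of $\sd^m(I,n)$) with the Jacobi--Trudi identity (Theorem~\ref{theo3}). By Remark~\ref{rem1}, the ribbon shape determined by $I$ is some skew shape $\lambda/\mu$ with $nm$ cells, and $\sd^m(I,n)$ is the coefficient of $x_1^m x_2^m \cdots x_n^m$ in $s_{\lambda/\mu}$. The key point is that the partitions $\lambda,\mu$ and the index $k$ (the number of rows of the ribbon) depend only on $I$, not on $n$. First I would expand the determinant in Theorem~\ref{theo3}: $s_{\lambda/\mu} = \sum_{\pi \in S_k} \operatorname{sgn}(\pi) \prod_{i=1}^{k} h_{\lambda_i - \mu_{\pi(i)} - i + \pi(i)}$, a signed sum of products of at most $k$ complete homogeneous symmetric functions, where $k = |I^-|+1 = t$ is fixed.

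Next I would extract the relevant coefficient from each term. For a single product $h_{c_1} h_{c_2} \cdots h_{c_k}$ (with the $c_i$ depending only on $I$ and $\pi$), the coefficient of $x_1^m \cdots x_n^m$ counts the ways to write the monomial $x_1^m \cdots x_n^m$ as a product of $k$ monomials, the $j$-th of which has total degree $c_j$ and is a monomial appearing in $h_{c_j}$ (i.e. any monomial of degree $c_j$ in $x_1,\dots,x_n$, since $h_{c_j}$ is the sum of all of them with coefficient $1$). Equivalently, this is the number of $k \times n$ nonnegative integer matrices whose $j$-th row sums to $c_j$ and whose every column sums to $m$. I would then argue that this count is a polynomial in $n$ for $n$ large: the column-sum-$m$ and row-sum-$c_j$ conditions, with $k$ and the $c_j$ fixed, describe lattice points of a fixed polytope scaled only in the "number of columns" direction, and standard Ehrhart-type / vector-partition reasoning (or a direct stars-and-bars inclusion-exclusion over which columns are "nonzero" in a given row) shows the count agrees with a polynomial in $n$ once $n$ is sufficiently large. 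Since $\sd^m(I,n)$ is a fixed signed sum of such counts, it too is eventually polynomial in $n$.

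The main obstacle is the polynomiality of the coefficient extraction, i.e.\ showing that the number of $k\times n$ nonnegative integer matrices with fixed row sums $c_1,\dots,c_k$ and all column sums equal to $m$ is eventually polynomial in $n$. I would handle this concretely rather than invoking heavy machinery: fix $m$ and the $c_j$; a column is one of finitely many vectors in $\{0,1,\dots,m\}^k$ summing to $m$, say there are $N$ such column types $u^{(1)},\dots,u^{(N)}$ (with $N$ depending only on $k$ and $m$); choosing the matrix amounts to choosing multiplicities $e_1,\dots,e_N \ge 0$ with $\sum e_\ell = n$ and $\sum_\ell e_\ell u^{(\ell)} = (c_1,\dots,c_k)$, and then $\binom{n}{e_1,\dots,e_N}$ orderings — wait, more carefully, the number of matrices with column multiset fixed is $\binom{n}{e_1,\dots,e_N}$, and we sum over the finitely many solutions $(e_\ell)$ of the linear system $\sum_\ell e_\ell u^{(\ell)} = (c_1,\dots,c_k)$ with $\sum_\ell e_\ell = n$. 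For each fixed solution pattern the multinomial $\binom{n}{e_1,\dots,e_N}$ with all but finitely many $e_\ell$ free to equal $n$ minus a constant is a polynomial in $n$; summing finitely many such gives a polynomial. One must check that for $n$ large the set of solution patterns stabilizes, which follows because increasing $n$ only increases the multiplicity assigned to the all-"spread" columns that already satisfy the constraints trivially once $n$ exceeds a bound depending on $I$ and $m$. Assembling these pieces and noting everything in sight depends only on $I$ and $m$ except the explicit polynomial dependence on $n$, we conclude $\sd^m(I,n)$ is a polynomial in $n$ for $n$ sufficiently large.

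\begin{proof}
This is a sketch; see the discussion above for the full argument. By Remark~\ref{rem1}, $\sd^m(I,n) = [x_1^m x_2^m \cdots x_n^m]\, s_{\lambda/\mu}$, where $\lambda/\mu$ is the ribbon shape determined by $I$, which has exactly $t$ rows. Applying the Jacobi--Trudi identity (Theorem~\ref{theo3}) expands $s_{\lambda/\mu}$ as a signed sum of at most $t!$ products $h_{c_1}\cdots h_{c_t}$, with exponents $c_i = \lambda_i - \mu_{\pi(i)} - i + \pi(i)$ depending only on $I$. The coefficient of $x_1^m\cdots x_n^m$ in such a product equals the number of $t\times n$ matrices over $\{0,1,\dots,m\}$ (equivalently, over the nonnegative integers, since this forces entries at most $m$) with $i$-th row sum $c_i$ and every column sum equal to $m$. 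Encoding each column as one of the finitely many vectors in $\{0,\dots,m\}^t$ of coordinate sum $m$ and counting via multinomial coefficients over the finitely many solutions of the resulting linear system shows this number agrees, for all sufficiently large $n$, with a polynomial in $n$. As $\sd^m(I,n)$ is a fixed signed sum of finitely many such quantities, it is a polynomial in $n$ for $n$ sufficiently large.
\end{proof}
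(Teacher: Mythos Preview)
Your overall strategy---Remark~\ref{rem1} plus Jacobi--Trudi---is exactly the paper's, but there is a genuine gap in the execution. You assert that ``the partitions $\lambda,\mu$ and the index $k$ \ldots\ depend only on $I$, not on $n$'' and hence that the exponents $c_i=\lambda_i-\mu_{\pi(i)}-i+\pi(i)$ in each Jacobi--Trudi term depend only on $I$. This is false: the ribbon has $nm$ cells, so $|\lambda|-|\mu|=nm$, and in particular $\lambda_1=nm-\alpha_t+\mu_1$ grows with $n$. (Also, the ribbon has $k=t+1$ rows, not $t$.) Consequently, in every nonzero term of the determinant exactly one factor is of the form $h_{nm-u}$ for some $u$ depending only on $I$ and the permutation $\pi$, while the remaining $k-1$ factors have indices $u_1,\dots,u_{k-1}$ (with $u_1+\cdots+u_{k-1}=u$) that are fixed. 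Your matrix count, as written, has all row sums $c_1,\dots,c_k$ fixed; but then $\sum c_i$ would be fixed while the column sums force $\sum c_i=nm$, a contradiction. So the ``finitely many column types, finitely many solution patterns'' step does not go through as stated.

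The paper's fix is precisely to separate off the unbounded factor: write $h_{u_1}\cdots h_{u_{k-1}}=\sum_{\nu\vdash u} v(\nu)\,m_\nu$ (a finite expansion in monomial symmetric functions with $u$ fixed), and then compute directly that the coefficient of $x_1^m\cdots x_n^m$ in $m_\nu\,h_{nm-u}$ equals $0$ if $\nu_1>m$ and otherwise equals a multinomial of the form $\binom{n}{r_1}\binom{n-r_1}{r_2}\cdots$, which is visibly a polynomial in $n$ once $n\ge u$. Your matrix-counting idea can be repaired along the same lines: once you acknowledge that one row sum is $nm-u$, the first row is determined by the others, and you are really counting $(k-1)\times n$ nonnegative integer matrices with fixed row sums $u_1,\dots,u_{k-1}$ and column sums $\le m$; the columns now range over a finite set independent of $n$, and the stabilization argument becomes honest. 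But as written, the claim that all $c_i$ are independent of $n$ is the missing idea, and without it the argument breaks.
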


\begin{proof}
From Remark \ref{rem1} we know that the permutations we are considering correspond to semistandard Young tableaux of shape $\lambda/\mu$ and it is sufficient to prove that the coefficient in front of $x_{1}^{m}x_{2}^{m}\cdots x_{n}^m$ in  $s_{\lambda/\mu}$ is a polynomial in $n$. To do that we use the Jacobi-Trudi identity (Theorem \ref{theo3}). 
\begin{figure}[h]
    \centering
    \includegraphics[scale=0.70]{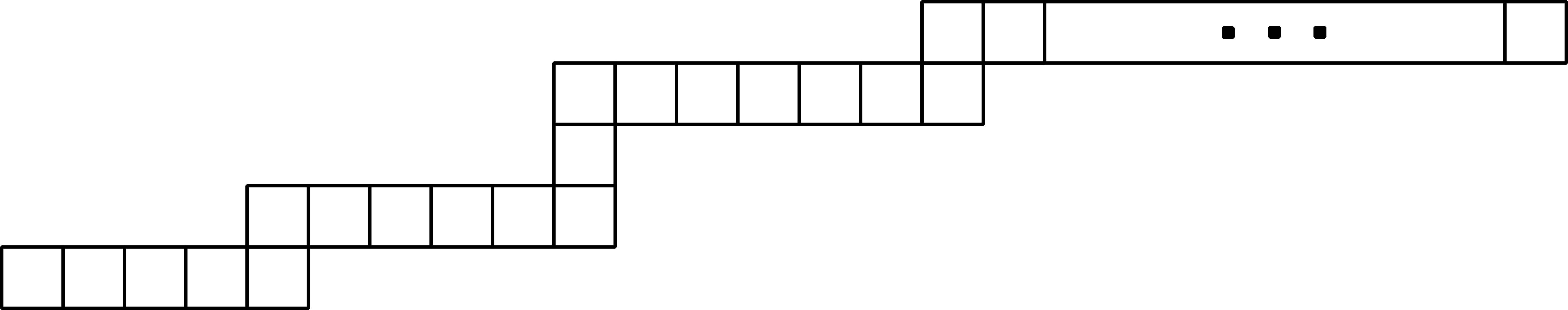}
    \caption{\centering
    Sketch of the shape $\lambda/\mu$ that corresponds to permutations we are considering in the proof of Theorem \ref{theo4}.}
    \label{fig6}
\end{figure}

Figure \ref{fig6} shows a sketch of the skew shape $\lambda/\mu$, which we are considering. The partition $\mu$ and all the elements in the partition $\lambda$ except $\lambda_{1}$ are determined by the set $I$. The number $k$ of non-zero elements in $\lambda$ is also determined by $I$. The number of elements in the first row of our skew shape is $nm-\alpha_{t}=\lambda_{1}-\mu_{1}$, from which we derive $\lambda_{1}=nm-\alpha_{t}+\mu_{1}$. Therefore,
each non-zero term in the determinant is equal to
\[
h_{nm-u}h_{u_{1}}h_{u_{2}}h_{u_{3}}\cdots h_{u_{k-1}}
\]
for some non-negative integers $u,u_{1},u_{2},u_{3}\ldots,u_{k-1}$ for which 
\[
u=u_{1}+u_{2}+u_{3}+\cdots+u_{k-1}.
\] 

It is important to mention that $u,u_{1},u_{2},u_{3}\ldots,u_{k-1}$ are determined only by the descent set $I$. Let us look closely at each term in the determinant:
\[
h_{nm-u}h_{u_{1}}h_{u_{2}}h_{u_{3}}\cdots h_{u_{k-1}}=
h_{nm-u}\sum\limits_{\nu\vdash u} v(\nu) m_{\nu}=
\sum\limits_{\nu\vdash u} v(\nu) m_{\nu} h_{nm-u}.
\]

The coefficient $v(\nu)$ is determined by the partition $\nu$ and the coefficients $u_{1},u_{2},u_{3}\ldots,u_{k-1}$. We want to calculate the coefficient in front of $x_{1}^{m}x_{2}^{m}\cdots x_{n}^m$ in $m_{\nu} h_{nm-u}$. Let the partition $\nu$ be equal to 
\[
\nu=[\underbrace{\nu_{1},\ldots,\nu_{1}}_{r_{1}},\underbrace{\nu_{2},\ldots,\nu_{2}}_{r_{2}},\ldots, \underbrace{\nu_{\ell},\ldots,\nu_{\ell}}_{r_{\ell}}],
\]
where $r_{i}$ is the multiplicity of $\nu_{i}$.

If $\nu_{1}>m$, the coefficient in front of $x_{1}^{m}x_{2}^{m}\cdots x_{n}^m$ in $m_{\nu} h_{nm-u}$ is $0$. 

If $\nu_{1}\leq m$, the coefficient in front of $x_{1}^{m}x_{2}^{m}\cdots x_{n}^m$ in $m_{\nu} h_{nm-u}$ is
\begin{align*}
\binom{n}{r_{1}} \binom{n-r_{1}}{r_{2}} \binom{n-r_{1}-r_{2}}{r_{3}}\cdots \binom{n-r_{1}-r_{2}-\cdots-r_{\ell-1}}{r_{\ell}}=\\
\binom{n}{r_{1}+r_{2}+\cdots+r_{\ell}} \frac{(r_{1}+r_{2}+\cdots+r_{\ell})!}{r_{1}!r_{2}!\cdots r_{\ell}!}=
\binom{n}{u} \frac{u!}{r_{1}!r_{2}!\cdots r_{\ell}!}.
\end{align*}

The coefficients $u,r_{1},r_{2},\ldots,r_{\ell}$ are determined only by $I$ and $m$, so if $n\geq u$, we get that the coefficient in front of $x_{1}^{m}x_{2}^{m}\cdots x_{n}^m$ in $m_{\nu} h_{nm-u}$ is a polynomial in $n$. The sum in each term of the determinant is over all partitions of $u$, so the number of summands in the sum is fixed. Therefore, the coefficient in front of $x_{1}^{m}x_{2}^{m}\cdots x_{n}^m$ in each term of the determinant is a polynomial in $n$. The set $I$ is fixed, thus we know that the number $k$ of non-zero elements in $\lambda$ is fixed too. So, there are a fixed number of elements in the determinant in the Jacobi-Trudi identity. Therefore, the coefficient in front of $x_{1}^{m}x_{2}^{m}\cdots x_{n}^m$ in $s_{\lambda/\mu}$ is a polynomial in $n$ for $n\geq u$, where $u$ is determined only by $I$.
\end{proof}

\section{Formula for $d^{\infty}(I,n)$}\label{form}

In Section \ref{gen}, we proved that $\sd^{m}(I,n)$ stabilizes for  $m\geq\alpha_{t}-t+1$ and in Section \ref{pol}, we proved that $\sd^{m}(I,n)$ is a polynomial in $n$ for $n\geq\alpha_{t}$. Therefore, it is reasonable to introduce a new notation for the stabilized polynomial when $n,m\geq\alpha_{t}$. 

\begin{definition}
Let $d^{\infty}(I,n)$ be the polynomial that equals the function $\sd^{m}(I,n)$ for $n,m\geq\alpha_{t}$.
\end{definition}

In this section, we derive an explicit formula for $d^{\infty}(I,n)$. However, we first need to introduce some crucial notations. 

\begin{definition}
For the set $I=\{\alpha_{1},\alpha_{2},\ldots,\alpha_{t}\}$, we let $\beta = (\beta_{1},\beta_{2},\ldots,\beta_{t})$ denote the sequence of the \textit{first differences} of the sequence $(0,\alpha_{1}, \alpha_{2},\ldots,\alpha_{t})$. So, \ $\beta_{1}=\alpha_{1}-0$, \ $\beta_{2}=\alpha_{2}-\alpha_{1}, \ldots, \ \beta_{t}=\alpha_{t}-\alpha_{t-1}$. 
\end{definition}

When we talk about compositions we imagine putting separators between balls in a line. For example, the division of the balls below corresponds to the composition $(3,1,2,2)$.
$$. \ . \ . \ | \ . \ | \ . \ . \ | \ . \ .$$

As we can see the numbers of balls between the separators give us the elements of the composition. Let us put a weight on each ball and instead of taking the number of balls, we take the total weight of the balls between the separators. Let us look at the previous example, but this time put weights on the balls. We take the weights to be $\beta_{1},\beta_{2},\beta_{3},\beta_{4},\beta_{5},\beta_{6},\beta_{7},\beta_{8}.$ Now the composition $(3,1,2,2)$ corresponds to
$$\beta_{1} \ \beta_{2} \ \beta_{3} \ | \ \beta_{4} \ | \ \beta_{5} \ \beta_{6} \ | \ \beta_{7} \ \beta_{8}.$$
We can write this in short as  ($\sigma_{1},\sigma_{2},\sigma_{3},\sigma_{4}$), where $\sigma_{1}=\beta_{1}+\beta_{2}+\beta_{3}$, \linebreak $\sigma_{2}=\beta_{4}, \ \sigma_{3}=\beta_{5}+\beta_{6}$, and $\sigma_{4}=\beta_{7}+\beta_{8}$. In the next definition, we define this more generally.

\begin{definition}\label{def1}
Let $A\in\Comp(t)$ be a composition of $t$ and $A=(a_{1},a_{2},\dots,a_{s})$. For the sequence $\beta=(\beta_{1},\beta_{2},\ldots,\beta_{t})$, we define the function $f_{\beta}$, such that $f_{\beta}(A)=(\sigma_{1},\sigma_{2},\dots,\sigma_{s})$, where 
\[
\sigma_{1}=\beta_{1}+\beta_{2}+\cdots+\beta_{a_{1}}
\]
\[
\sigma_{i}=\beta_{a_{1}+a_{2}+\cdots+a_{i-1}+1}+\beta_{a_{1}+a_{2}+\cdots+a_{i-1}+2}+\cdots+\beta_{a_{1}+a_{2}+\cdots+a_{i}} \text{ \ for all } i: \ 2\leq i\leq s.
\]
\end{definition}

To derive an explicit formula for $d^{\infty}(I,n)$ we first need to prove the following lemma.

\begin{lemma}\label{lem3}
Let $j$ and $n$ be positive integers such that $j \leq n$. For the non-empty set $I = \{\alpha_1, \alpha_2, \ldots, \alpha_t\}$, we define $\beta = (\beta_1, \beta_2, \ldots, \beta_t)$ as in Section \ref{pre}. Then, the number of sequences $v = (v_{1}, v_{2}, \ldots, v_{\alpha_t})$, which satisfy the conditions:
\begin{itemize}
    \item $v_{i}\in \{1,2,\ldots,n\}$ 
    \item $v_{\alpha_{t}}=j$,
    \item $\Des(v)=I^{-}$.
\end{itemize}
is
\begin{equation}\label{equ2}
\sum\limits_{A\in \Comp(t)}(-1)^{t-s} \binom{n-1+\sigma_{1}}{\sigma_{1}}\cdots\binom{n-1+\sigma_{s-1}}{\sigma_{s-1}}\binom{j-1+\sigma_{s}-1}{\sigma_{s}-1},
\end{equation}
where the summation is over all compositions $A \in \Comp(t)$. Recall that from \linebreak Definition \ref{def1} the variables $\sigma_1, \sigma_2, \ldots, \sigma_s$ depend on $A$ by the relation \linebreak $f_{\beta}(A) = (\sigma_1, \sigma_2, \ldots, \sigma_s)$.
\end{lemma}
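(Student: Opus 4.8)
The plan is to translate the conditions on $v$ into a statement about its weakly increasing runs, run a standard inclusion--exclusion over the junctions between runs, and recognize each term as a product of multiset (stars-and-bars) coefficients.

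First I would unpack $\Des(v)=I^{-}$. Since $v$ has length $\alpha_{t}$ and $I^{-}=\{\alpha_{1},\ldots,\alpha_{t-1}\}$, this says precisely that $v$ splits into exactly $t$ maximal weakly increasing runs $R_{1},\ldots,R_{t}$, where $R_{k}$ occupies positions $\alpha_{k-1}+1,\ldots,\alpha_{k}$ (with $\alpha_{0}=0$) and hence has length $\beta_{k}$, and where there is a strict descent $v_{\alpha_{k}}>v_{\alpha_{k}+1}$ at each of the $t-1$ junctions $k=1,\ldots,t-1$. Counting comparisons confirms that this is equivalent to the conjunction ``each $R_{k}$ is weakly increasing'' and ``no junction is merged'', since $\sum_{k}(\beta_{k}-1)+(t-1)=\alpha_{t}-1$. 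Thus, together with $v_{i}\in\{1,\ldots,n\}$ and $v_{\alpha_{t}}=j$, the objects we must count are exactly those in a set $E$ (sequences with each $R_{k}$ weakly increasing, entries in $\{1,\ldots,n\}$, and $v_{\alpha_{t}}=j$) that avoid all the ``bad'' events $B_{k}=\{v\in E:\,v_{\alpha_{k}}\le v_{\alpha_{k}+1}\}$ for $k=1,\ldots,t-1$.

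Next comes the inclusion--exclusion: the desired count equals $\sum_{S\subseteq\{1,\ldots,t-1\}}(-1)^{|S|}\,N(S)$, where $N(S)=\#\bigl(E\cap\bigcap_{k\in S}B_{k}\bigr)$. Given $S$, the merged junctions glue $R_{1},\ldots,R_{t}$ into $s:=t-|S|$ consecutive blocks, and recording how many runs each block absorbs gives a composition $A=(a_{1},\ldots,a_{s})\in\Comp(t)$; the map $S\mapsto A$ is the usual bijection $2^{\{1,\ldots,t-1\}}\to\Comp(t)$ (via partial sums), under which $(-1)^{|S|}=(-1)^{t-s}$, and the $i$-th block has length $\sigma_{i}$ where $f_{\beta}(A)=(\sigma_{1},\ldots,\sigma_{s})$ by Definition \ref{def1}. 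Inside $E\cap\bigcap_{k\in S}B_{k}$, each block is simply an arbitrary weakly increasing sequence with entries in $\{1,\ldots,n\}$, chosen independently of the other blocks, with the one extra constraint that the last block ends at $j$.

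It then remains to count these blocks. A weakly increasing sequence of length $\ell$ with entries in $\{1,\ldots,n\}$ is a size-$\ell$ multiset on $n$ symbols, so there are $\binom{n-1+\ell}{\ell}$ of them; if moreover the last entry must equal $j$, then the first $\ell-1$ entries are a weakly increasing sequence with entries in $\{1,\ldots,j\}$, giving $\binom{j-1+(\ell-1)}{\ell-1}$ (well-defined since $j\le n$ and $\sigma_{s}\ge 1$). Hence $N(S)=\binom{n-1+\sigma_{1}}{\sigma_{1}}\cdots\binom{n-1+\sigma_{s-1}}{\sigma_{s-1}}\binom{j-1+\sigma_{s}-1}{\sigma_{s}-1}$, and substituting this into the alternating sum and re-indexing by $A\in\Comp(t)$ gives exactly \eqref{equ2}. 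The only point requiring genuine care is the independence claim inside $E\cap\bigcap_{k\in S}B_{k}$ --- that imposing the merge inequalities at the junctions in $S$ really does leave each resulting block a free weakly increasing run with no residual interaction between distinct blocks --- plus the bookkeeping of the bijection $S\leftrightarrow A$ and its sign; beyond that the argument is routine.
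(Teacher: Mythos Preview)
Your argument is correct, and it takes a genuinely different route from the paper's. The paper proves the formula by induction on $t$: it fixes the value $i=v_{\alpha_{t-1}}$, applies the induction hypothesis to the prefix $(v_{1},\ldots,v_{\alpha_{t-1}})$ (with descent set $(I^{-})^{-}$ and last entry $i$), separately counts the weakly increasing tail of length $\beta_{t}$ ending at $j$ with first entry strictly below $i$, sums over $i$, and then collapses the resulting double sum via Vandermonde-type convolutions to rebuild a sum over $\Comp(t)$ from the sum over $\Comp(t-1)$. You instead set up the inclusion--exclusion over the $t-1$ junction events in one shot and read off each term directly from stars-and-bars, with the standard bijection $S\leftrightarrow A$ between subsets of $\{1,\ldots,t-1\}$ and compositions of $t$ supplying the indexing. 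Your approach makes the product $\prod_{i<s}\binom{n-1+\sigma_{i}}{\sigma_{i}}$ appear immediately with no intermediate summation over $i$ and no binomial identities; the paper's induction, by contrast, never invokes inclusion--exclusion explicitly and keeps the recursive structure in $t$ front and center. The one point you flag as needing care --- that blocks are independent once the $S$-junctions are merged --- is indeed immediate, since at junctions outside $S$ the set $E$ imposes no constraint at all.
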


\begin{proof}
To prove this statement we make an induction on $t$ --- the number of elements in $I$.

When $t=1$, we have that $I=\alpha_{1}$ and $I^{-}=\varnothing$. Therefore, the sequence $v$ should look like 
\[
1\leq v_{1}\leq v_{2}\leq v_{3}\leq\dots\leq v_{\alpha_{1}}=j.
\]

There are $\binom{j-1+\alpha_{1}-1}{\alpha_{1}-1}$ such sequences. Let us see what result we get by applying formula \eqref{equ2}. When $t=1$, we get that  $\beta=(\alpha_{1})$ and the set $\Comp(1)=\{(1)\}$. Therefore, $f_{\beta}((1))=\alpha_{1}$ and $\sigma_{1}=\alpha_{1}$. Substituting this in formula \eqref{equ2}, we get
\begin{align*}
\sum\limits_{A\in \Comp(t)}(-1)^{t-s}\binom{n-1+\sigma_{1}}{\sigma_{1}}  \dots \binom{n-1+\sigma_{s-1}}{\sigma_{s-1}} \binom{j-1+\sigma_{s}-1}{\sigma_{s}-1}=\\
=(-1)^{1-1} \binom{j-1+\sigma_{1}-1}{\sigma_{1}-1} = \binom{j-1+\alpha_{1}-1}{\alpha_{1}-1},
\end{align*}
so formula \eqref{equ2} is true for $t=1$. Let us suppose it is true for $t-1$. We want to prove that it is true for $t$. We fix $v_{\alpha_{t-1}}=i$. From the induction hypothesis, we know that there are
\[
\sum\limits_{A\in \Comp(t-1)}(-1)^{t-s-1}\binom{n-1+\sigma_{1}}{\sigma_{1}}  \dots \binom{n-1+\sigma_{s-1}}{\sigma_{s-1}} \binom{i-1+\sigma_{s}-1}{\sigma_{s}-1}
\]
ways to choose the first $\alpha_{t-1}$ elements of the sequence. Let us look at the last $\alpha_{t}-\alpha_{t-1}$ elements of the sequence:
\[
i > v_{\alpha_{t-1}+1}\leq v_{\alpha_{t-1}+2} \leq \cdots \leq v_{\alpha_{t}-1} \leq v_{\alpha_{t}}=j.
\]

If $i+1\geq j$, there are
\[
\binom{j-1+\alpha_{t}-\alpha_{t-1}-1}{\alpha_{t}-\alpha_{t-1}-1}=
\binom{j-1+\beta_{t}-1}{\beta_{t}-1}
\]
ways to choose the last $\alpha_{t}-\alpha_{t-1}$ elements of the sequence.

If $i\leq j$, there are
\[
\binom{j-1+\beta_{t}-1}{\beta_{t}-1}-\binom{j-i+\beta_{t}-1}{\beta_{t+1}-1} 
\]
ways to choose the last $\alpha_{t}-\alpha_{t-1}$ elements of the sequence.

Therefore, the number of sequences $v$ with a descent set $I^{-}=\{\alpha_{1},\alpha_{2},\dots,\alpha_{t-1}\}$, last element $v_{\alpha_{t}}=j$ and elements from the set $\{1,2,\ldots,n\}$ is

$\sum\limits_{i=1}^{j} \left(\binom{j-1+\beta_{t}-1}{\beta_{t}-1}-\binom{j-i+\beta_{t}-1}{\beta_{t}-1}\right) \sum\limits_{A\in \Comp(t-1)}(-1)^{t-s-1}\binom{n-1+\sigma_{1}}{\sigma_{1}}  \cdots \binom{n-1+\sigma_{s-1}}{\sigma_{s-1}} \binom{i-1+\sigma_{s}-1}{\sigma_{s}-1}$

$\hphantom{=}+\sum\limits_{i=j+1}^{n} \binom{j-1+\beta_{t}-1}{\beta_{t}-1} \sum\limits_{A\in \Comp(t-1)}(-1)^{t-s-1}\binom{n-1+\sigma_{1}}{\sigma_{1}}  \cdots \binom{n-1+\sigma_{s-1}}{\sigma_{s-1}} \binom{i-1+\sigma_{s}-1}{\sigma_{s}-1}$

$=\sum\limits_{i=1}^{n} \sum\limits_{A\in \Comp(t-1)}(-1)^{t-s-1}\binom{n-1+\sigma_{1}}{\sigma_{1}}  \cdots \binom{n-1+\sigma_{s-1}}{\sigma_{s-1}} \binom{i-1+\sigma_{s}-1}{\sigma_{s}-1} \binom{j-1+\beta_{t}-1}{\beta_{t}-1} $ 

$\hphantom{=}+\sum\limits_{i=1}^{j} \sum\limits_{A\in \Comp(t-1)}(-1)^{t-s}\binom{n-1+\sigma_{1}}{\sigma_{1}}  \cdots \binom{n-1+\sigma_{s-1}}{\sigma_{s-1}} \binom{i-1+\sigma_{s}-1}{\sigma_{s}-1} \binom{j-i+\beta_{t}-1}{\beta_{t}-1}$

$=\sum\limits_{A\in \Comp(t-1)}(-1)^{t-(s+1)}\binom{n-1+\sigma_{1}}{\sigma_{1}}  \cdots \binom{n-1+\sigma_{s-1}}{\sigma_{s-1}} \binom{n-1+\sigma_{s}}{\sigma_{s}}\binom{j-1+\beta_{t}-1}{\beta_{t}-1}$

$\hphantom{=}+\sum\limits_{A\in \Comp(t-1)}(-1)^{t-s}\binom{n-1+\sigma_{1}}{\sigma_{1}}  \cdots \binom{n-1+\sigma_{s-1}}{\sigma_{s-1}} \binom{j-1+\sigma_{s}+\beta_{t}-1}{\sigma_{s}+\beta_{t}-1}$

$=\sum\limits_{A\in \Comp(t)}(-1)^{t-s}\binom{n-1+\sigma_{1}}{\sigma_{1}}  \cdots \binom{n-1+\sigma_{s-1}}{\sigma_{s-1}} \binom{j-1+\sigma_{s}-1}{\sigma_{s}-1},$\\
which finishes the induction.
\end{proof}

In the next theorem, we derive an explicit formula for $d^{\infty}(I,n)$. To do this we use the result from the previous lemma.

\begin{theorem}\label{theo6}
For the non-empty set $I = \{\alpha_1, \alpha_2, \ldots, \alpha_t\}$, we define $\beta = (\beta_1, \beta_2, \ldots, \beta_t)$ as in Section \ref{pre}. Then: 
\begin{equation*}\large{
d^{\infty}(I,n)=\sum\limits_{A\in \Comp(t)}(-1)^{t-s}\binom{n-1+\sigma_{1}}{\sigma_{1}} \dots \binom{n-1+\sigma_{s-1}}{\sigma_{s-1}}
\left(\binom{n-1+\sigma_{s}}{\sigma_{s}}-1\right) }, 
\end{equation*}
where, in the summation above, we sum over all compositions $A \in \Comp(t)$. Recall that from Definition \ref{def1} the variables $\sigma_{1}, \sigma_{2}, \ldots, \sigma_{s}$ depend on $A$ by the relation $f_{\beta}(A) = (\sigma_1, \sigma_2, \ldots, \sigma_s)$.
\end{theorem}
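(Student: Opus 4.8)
The plan is to express $d^{\infty}(I,n)$ as a sum over the possible values of the last entry $v_{\alpha_t}$ and then feed each term into Lemma \ref{lem3}. First I would invoke Proposition \ref{prop3}: since $n,m\ge\alpha_t\ge\alpha_t-t+1$, the quantity $d^{\infty}(I,n)=\sd^{m}(I,n)$ equals the number of sequences $v=(v_1,\dots,v_{\alpha_t})$ with entries in $\{1,\dots,n\}$, $\Des(v)=I^{-}$, and $v_{\alpha_t}\ne 1$. Partitioning these sequences according to the value $v_{\alpha_t}=j$ with $j\in\{2,3,\dots,n\}$ gives
\[
d^{\infty}(I,n)=\sum_{j=2}^{n} N_j ,
\]
where $N_j$ counts the sequences of the above type with $v_{\alpha_t}=j$. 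Each such $j$ satisfies $j\le n$, so Lemma \ref{lem3} applies and supplies a closed form for $N_j$.

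Next I would substitute the Lemma \ref{lem3} formula for $N_j$ and interchange the two finite sums, moving $\sum_{j=2}^{n}$ inside $\sum_{A\in\Comp(t)}$. The factors $(-1)^{t-s}$ and $\binom{n-1+\sigma_1}{\sigma_1}\cdots\binom{n-1+\sigma_{s-1}}{\sigma_{s-1}}$ are independent of $j$, so after pulling them out the only surviving $j$-dependent quantity is $\binom{j-1+\sigma_s-1}{\sigma_s-1}$, and the remaining task is to evaluate $\sum_{j=2}^{n}\binom{j-1+\sigma_s-1}{\sigma_s-1}$.

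The key computational step is the hockey-stick identity. Writing $i=j-1$ and using $\sum_{i=0}^{n-1}\binom{\sigma_s-1+i}{i}=\binom{n-1+\sigma_s}{n-1}=\binom{n-1+\sigma_s}{\sigma_s}$, then removing the $i=0$ term (which equals $1$), I obtain
\[
\sum_{j=2}^{n}\binom{j-1+\sigma_s-1}{\sigma_s-1}=\binom{n-1+\sigma_s}{\sigma_s}-1 .
\]
Plugging this back into the sum over compositions reproduces the claimed identity
\[
d^{\infty}(I,n)=\sum_{A\in\Comp(t)}(-1)^{t-s}\binom{n-1+\sigma_1}{\sigma_1}\cdots\binom{n-1+\sigma_{s-1}}{\sigma_{s-1}}\left(\binom{n-1+\sigma_s}{\sigma_s}-1\right),
\]
which finishes the argument.

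I do not anticipate a serious obstacle: Lemma \ref{lem3} carries the combinatorial weight, and what remains is routine index bookkeeping together with verifying the hypotheses ($n,m\ge\alpha_t$ so Proposition \ref{prop3} applies, and $j\le n$ so Lemma \ref{lem3} applies) for every term in the sum. The only place that calls for a little care is matching the index $\binom{j-1+\sigma_s-1}{\sigma_s-1}$ appearing in Lemma \ref{lem3} with the standard shape of the hockey-stick identity, and confirming that the boundary term is exactly $1$, so that the ``$-1$'' in the theorem is produced precisely once per composition $A$.
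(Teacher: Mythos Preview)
Your proposal is correct and follows essentially the same route as the paper's own proof: invoke Proposition~\ref{prop3} to rewrite $d^{\infty}(I,n)$ as a count of sequences with $v_{\alpha_t}\neq 1$, sum Lemma~\ref{lem3} over $j=2,\dots,n$, swap the two finite sums, and collapse the inner sum via the hockey-stick identity to produce $\binom{n-1+\sigma_s}{\sigma_s}-1$. If anything, you are slightly more explicit than the paper about checking the hypotheses and identifying the boundary term, so there is nothing to fix.
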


\begin{proof}
From the proof of Proposition \ref{prop3}, we know that $d^{\infty}(I,n)$ is equal to the number of sequences $v=(v_{1},v_{2},v_{3},\ldots ,v_{\alpha_{t}})$, which satisfy the conditions:
\begin{itemize}
    \item $\Des(v)=I^{-}$,
    \item $v_{i}\in\{1,2,\ldots,n\}$,
    \item $v_{\alpha_{t}}\neq 1$.
\end{itemize} 

We can compute the number of sequences $v$ by summing over all $j$ from $2$ to $n$ in Lemma \ref{lem3}. Therefore, we obtain that
\begin{equation*}
\begin{split}
d^{\infty}(I,n) & = \sum\limits_{j=2}^{n} \sum\limits_{A\in \Comp(t)}(-1)^{t-s}\binom{n-1+\sigma_{1}}{\sigma_{1}} \dots \binom{n-1+\sigma_{s-1}}{\sigma_{s-1}}
\binom{j-1+\sigma_{s}-1}{\sigma_{s}-1} \\
& =\sum\limits_{A\in \Comp(t)}(-1)^{t-s}
\binom{n-1+\sigma_{1}}{\sigma_{1}} \dots \binom{n-1+\sigma_{s-1}}{\sigma_{s-1}}
\left(\binom{n-1+\sigma_{s}}{\sigma_{s}}-1\right).    
\end{split}
\end{equation*}
\end{proof}

\section{Coefficients for $d^{\infty}(I,n)$}\label{coef}

In this section, we look at the coefficients of $d^{\infty}(I,n)$ in bases of the type $\left(\binom{n+k}{i}\right)_{i=0}^{\infty}$. We determine when these coefficients are positive and derive the exact values for some of them.

\begin{definition}\label{def5}
For a non-empty set $I$ and an integer $i$, we define $d_{i}^{\infty}(I,n)$ to equal the number of sequences $v=(v_{1},v_{2},\ldots,v_{\alpha_{t}})$, which satisfy the following conditions:
\begin{itemize}
\item $\Des(v)=I^{-}$,
\item $v_{j}\in\{1,2,\dots n\}$,
\item exactly $i$ numbers from the set
$\{2,3,\dots n\}$ appear in $v$,
\item $v_{\alpha_{t}}\neq 1$.
\end{itemize}
\end{definition}

\begin{definition}\label{def3}
Let $b_{0}(I),b_{1}(I),b_{2}(I),\ldots$ be the coefficients of $d^{\infty}(I,n)$ in the base  
$\left(\binom{n-1}{i}\right)_{i=0}^{\infty}$:
\[
d^{\infty}(I,n) = b_{0}\binom{n-1}{0} + b_{1}\binom{n-1}{1} + b_{2}\binom{n-1}{2}+\cdots.
\]
\end{definition}

In the next theorem, we prove some interesting properties of the coefficients $b_{i}(I)$. We use ideas similar to the ideas that Diaz-Lopez et al.\cite{diaz2019descent} used in the proof of Theorem 3.3 in their paper \textit{Descent polynomials}.

\begin{theorem}\label{theo1}
Let the set $I=\{\alpha_{1},\ldots,\alpha_{t}\}$ be non-empty and $L$ be the length of the longest sequence of consecutive numbers in $I$.  For each $i$ such that \linebreak $L\leq i\leq\alpha_{t}$, the coefficient $b_{i}(I)$ is a positive integer. For $i$ such that $i<L$ or $i>\alpha_{t}$, the coefficient $b_{i}(I)=0$.
\end{theorem}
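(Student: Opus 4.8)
The plan is to realize each coefficient $b_i(I)$ directly as the combinatorial quantity $d_i^{\infty}(I,n)$ from Definition \ref{def5}, and then to analyze that count to pin down exactly for which $i$ it is nonzero and positive. The key observation is that $d^{\infty}(I,n)$ counts sequences $v=(v_1,\dots,v_{\alpha_t})$ with $\Des(v)=I^-$, entries in $\{1,\dots,n\}$, and $v_{\alpha_t}\neq 1$; if I split this count according to the number $i$ of \emph{distinct} values from $\{2,3,\dots,n\}$ that actually appear in $v$, I get $d^{\infty}(I,n)=\sum_i d_i^{\infty}(I,n)$ where each contribution of a fixed support of size $i$ is, by Remark \ref{rem1}-type reasoning, independent of which $i$-element subset of $\{2,\dots,n\}$ is chosen. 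Choosing the support amounts to picking $i$ elements out of $\{2,\dots,n\}$ (an $(n-1)$-element set), which can be done in $\binom{n-1}{i}$ ways — note the value $1$ is always available ``for free'' since the other entries can equal $1$. Hence $d^{\infty}(I,n)=\sum_i c_i \binom{n-1}{i}$ where $c_i$ is the number of valid sequences using a \emph{fixed} set of $i$ prescribed values from $\{2,\dots,n\}$ together with possibly $1$, each of the $i$ prescribed values used at least once. By uniqueness of expansion in the basis $\left(\binom{n-1}{i}\right)_{i=0}^{\infty}$, this $c_i$ equals $b_i(I)$, and it is manifestly a non-negative integer.

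Next I would prove the two vanishing claims. For $i>\alpha_t$: a sequence of length $\alpha_t$ uses at most $\alpha_t$ distinct values, so it certainly cannot use more than $\alpha_t$ values from $\{2,\dots,n\}$; hence $b_i(I)=0$ for $i>\alpha_t$. For $i<L$: suppose $\alpha_{j+1}=\alpha_j+1=\cdots$ is a block of $L$ consecutive integers in $I$, say $\{\alpha_{j},\alpha_{j}+1,\dots,\alpha_{j}+L-1\}\subseteq I$ with $\alpha_j - 1 \notin I$ (to fix the left end). Because $\Des(v)=I^-$ forces strict descents at each of these positions (at least $L-1$ of them lie in $I^-$ since $I^-$ omits only $\alpha_t$; I would handle the edge case where the top block is the one containing $\alpha_t$ separately, noting the ``$v_{\alpha_t}\neq 1$'' condition then supplies the missing strict inequality against an implicit value $1$ at position $\alpha_t+1$), the entries at these consecutive positions form a strictly decreasing run of length $L$, hence take $L$ distinct values; at most one of these values can be $1$, so at least $L-1$ of them lie in $\{2,\dots,n\}$ — wait, I need $L$, not $L-1$. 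The fix: a strictly decreasing run of length $L$ sitting over positions whose \emph{following} position (position $\alpha_j+L$) is not a descent means the run bottoms out at a value that is still $\le$ the next entry; combined with $v_{\alpha_t}\neq 1$ in the top-block case, one shows the run actually forces $L$ distinct values all $\ge 2$. I would carry this out carefully, the cleanest route being: the $L$ positions $\alpha_j,\dots,\alpha_j+L-1$ carry strictly decreasing entries $v_{\alpha_j}>\cdots>v_{\alpha_j+L-1}$, and since position $\alpha_j$ is itself a descent or $\alpha_j=\alpha_1$ with... — actually the robust statement is simply that these $L$ strictly decreasing entries, being at least $L$ in number and pairwise distinct, cannot all lie in a set that has fewer than $L$ elements $\ge 2$; and since $v$ uses at most $i$ values from $\{2,\dots,n\}$ plus possibly the value $1$, and the run contributes $\ge L$ distinct values of which $\le 1$ equals $1$, we get $i\ge L-1$ at least, and the boundary argument upgrades this to $i\ge L$.

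Finally I would prove strict positivity for $L\le i\le\alpha_t$ by exhibiting, for each such $i$, at least one sequence $v$ of length $\alpha_t$ with $\Des(v)=I^-$, $v_{\alpha_t}\neq 1$, and exactly $i$ distinct values from $\{2,\dots,n\}$ (this requires $n\ge i+1$, which holds in the regime $n\ge\alpha_t$ where $d^{\infty}$ is defined, and in particular $n\ge L+1$). The construction mirrors the ribbon-tableau example in Figure \ref{fig3}: within each maximal run of consecutive descents of $I^-$ place a strictly decreasing run of consecutive integers topped at a large value, on the non-descent ``ascending'' stretches either repeat values (to keep the support small, reaching $i=L$) or introduce fresh values (to grow the support up to $i=\alpha_t$), always keeping the final entry $\ne 1$. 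Interpolating between these extremes gives every intermediate value of $i$, so $b_i(I)=d_i^{\infty}(I,n)>0$ for all $L\le i\le\alpha_t$.

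The main obstacle I anticipate is the $i<L$ vanishing bound — specifically getting the \emph{sharp} threshold $L$ rather than $L-1$, which hinges on correctly exploiting the ``$v_{\alpha_t}\neq 1$'' condition and the precise relationship between the longest consecutive block of $I$ and the descent structure of $I^-$ (since $I^-$ drops $\alpha_t$, the longest consecutive block of $I^-$ may be shorter than $L$, and the boundary condition at position $\alpha_t$ is exactly what compensates). Everything else is a matter of bookkeeping with the basis-expansion identity and an explicit construction.
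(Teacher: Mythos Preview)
Your approach is essentially the paper's: the same combinatorial interpretation of $b_i(I)$ via stratifying by the number of distinct values from $\{2,\dots,n\}$, the same trivial upper bound $i\le\alpha_t$, and the same positivity strategy of building explicit witnesses at $i=L$ and $i=\alpha_t$ and interpolating between them.

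The one genuine gap is exactly the one you flag: the sharp lower bound $i\ge L$. Your count of the strictly decreasing run is off by one. If $\{k,k+1,\dots,k+L-1\}\subseteq I$ is a maximal block of $L$ consecutive integers, split into two cases. If $k+L-1<\alpha_t$, then all $L$ of these positions lie in $I^-$, which forces
\[
v_k>v_{k+1}>\cdots>v_{k+L},
\]
a chain of $L+1$ distinct values (the run spans positions $k$ through $k+L$, not just through $k+L-1$); at most one of these equals $1$, so at least $L$ lie in $\{2,\dots,n\}$. If instead $k+L-1=\alpha_t$, then only $k,\dots,\alpha_t-1$ lie in $I^-$, giving
\[
v_k>v_{k+1}>\cdots>v_{\alpha_t},
\]
a chain of $L$ distinct values; but now $v_{\alpha_t}\neq 1$ forces \emph{all} $L$ of them to lie in $\{2,\dots,n\}$. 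Either way $i\ge L$, and no separate ``boundary argument'' is needed beyond this two-case split. The paper states this conclusion in one sentence without spelling out the cases, but this is what is behind it.

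Your positivity sketch is also in line with the paper: it builds an explicit ribbon filling realizing $i=L$ (put $1$'s on all horizontal cells and fill each column with consecutive integers), an explicit filling realizing $i=\alpha_t$ (all entries distinct, arranged to respect the ribbon), and then replaces entries one at a time to hit every intermediate $i$. You should make sure your $i=L$ witness actually has $v_{\alpha_t}\neq 1$; in the paper's construction this holds because position $\alpha_t$ sits in a column of height $\ge 2$, so the entry there is at least $2$.
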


\begin{proof}
To prove this statement, we first want to find a combinatorial interpretation of the coefficients: $b_{0}(I),b_{1}(I),b_{2}(I),\ldots.$    

In the proof of Proposition \ref{prop3} we conclude that the value of $d^{\infty}(I,n)$ is equal to the number of sequences $v=(v_{1},v_{2},\ldots,v_{\alpha_{t}})$, which satisfy the following conditions:
\begin{itemize}
\item $\Des(v)=I^{-}$,
\item $v_{j}\in\{1,2,\dots n\}$,
\item $v_{\alpha_{t}}\neq 1$.
\end{itemize}

Therefore, from Definition \ref{def5}, we obtain that
\begin{equation}\label{equ3}
d^{\infty}(I,n)= d_{0}^{\infty}(I,n)+d_{1}^{\infty}(I,n)+d_{2}^{\infty}(I,n)+ d_{3}^{\infty}(I,n)+\cdots.    
\end{equation}

Let $v$ be a sequence that contains exactly $i$ of the elements from the set $\{2,3,4,\ldots,n\}$. Let us replace these $i$ elements with other $i$ elements from the set $\{2,3,4,\ldots,n\}$, such that the $j$-th number in size from the old $i$ elements is replaced with the $j$-th number in size from the new $i$ elements. By doing that, we keep the descent set $I^{-}$ and the last element remains bigger than $1$. So, $d_{i}^{\infty}(I,n)$ is equal to $\binom{n-1}{i}$ times the number of sequences $v=(v_{1},v_{2},\ldots,v_{\alpha_{t}})$, which satisfy the conditions:
\begin{itemize}
\item $\Des(v)=I^{-}$,
\item $v_{j}\in\{1,2,\dots i+1\}$,
\item for every $\ell\in\{2,3,\dots i+1\}$, there exists $j$ such that $v_{j}=\ell$,
\item $v_{\alpha_{t}}\neq 1$.
\end{itemize}

If we write $d_{i}^{\infty}(I,n)$ as $\binom{n-1}{i}$ times the number of these sequences in Equation \eqref{equ3}, we get a representation of $d^{\infty}(I,n)$ in the base $\left(\binom{n-1}{i}\right)_{i=0}^{\infty}$. Therefore, the coefficient $b_{i}(I)$ is equal to the number of sequences that satisfy the same four conditions we just mentioned above:
\begin{itemize}
\item $\Des(v)=I^{-}$,
\item $v_{j}\in\{1,2,\dots i+1\}$,
\item for every $\ell\in\{2,3,\dots i+1\}$, there exists $j$ such that $v_{j}=\ell$,
\item $v_{\alpha_{t}}\neq 1$.
\end{itemize}

Let $v$ be one such sequence. Because the length of $v$ is $\alpha_{t}$, we get that there at most $\alpha_{t}$ different numbers in $v$. Also, from the first condition, we get that there are at least $L$ different numbers in $v$ bigger than $1$.  However, from the third condition we get that there are exactly $i$ different numbers in $v$ bigger than $1$, which leads us to the inequality:
\[L\leq\ i \leq\alpha_{t}.\]

From this inequality, we conclude that:
\[0=b_{0}(I) = b_{1}(I) = \cdots = b_{L-1}(I) = b_{\alpha_{t}+1}(I) = b_{\alpha_{t}+2}(I) = \cdots.\]

Now, we must prove that $b_{L}(I),b_{L+1}(I),\ldots, b_{\alpha_{t}}(I)$ are positive integers. For each $i$ such that $L\leq i \leq\alpha_{t}$, we need to show an example of a sequence counted by $b_{i}(I)$. To show such examples we use semistandard Young tableaux of ribbon shape (cf. Remark \ref{rem2}).

First, we show a sequence counted by $b_{L}(I)$. We let all numbers in positions that are not in the descent set $I$ to be equal to $1$. In the columns of the Young tableau, we put consecutive numbers in increasing order starting from $1$. In the example in Figure \ref{fig1}, we show one such sequence. 

\begin{figure}[h]
\centering
  \centering
  \includegraphics[scale=0.7]{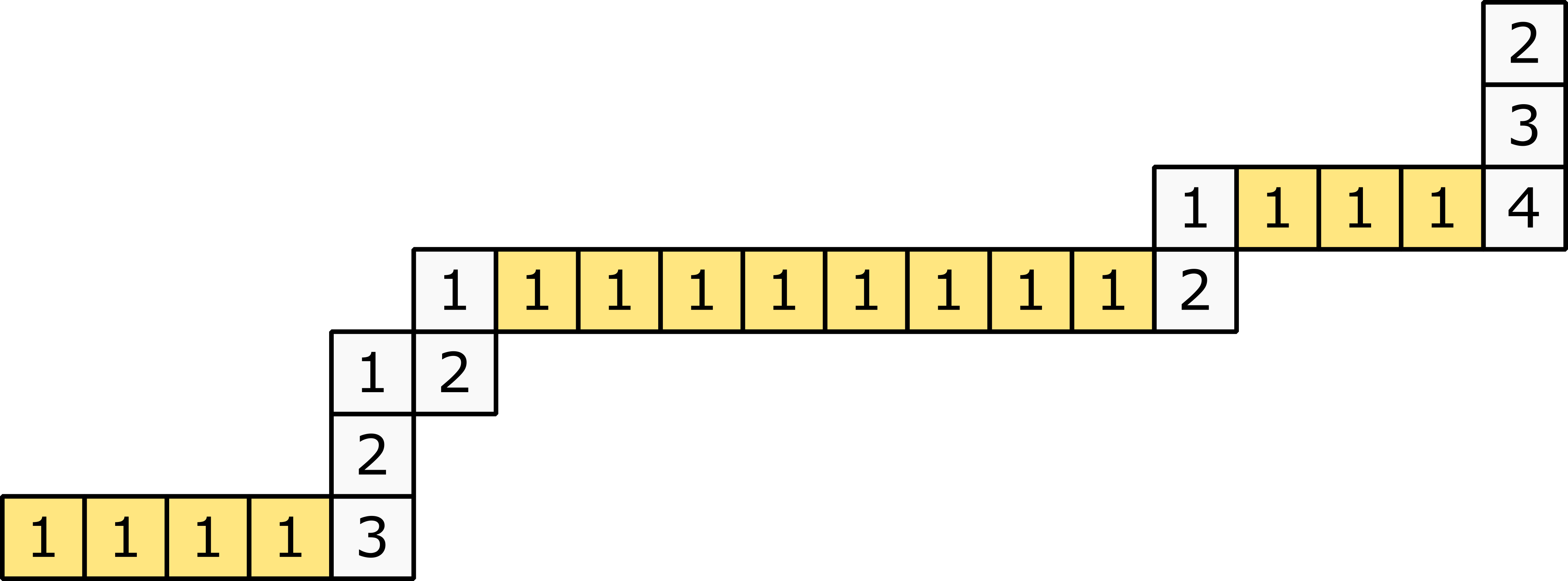}
  \caption{An example of a sequence counted by $b_{L}(I)$.}
  \label{fig1}
\end{figure}

Then, we want show a sequence counted by $b_{\alpha_{t}}(I)$. To ease the explanation we define two more sets related to the descent set $I$:
\[I^{\prime}=\{i : \ i\in I, \ i-1\notin I\}, \ I^{\prime}=\{\alpha_{1}^{\prime},\alpha_{2}^{\prime},\ldots,\alpha_{p}^{\prime}\},\]
\[I^{''}=\{i : \ i\notin I, \ i-1\in I\}, \ I^{''}=\{\alpha_{1}^{''},\alpha_{2}^{''},\ldots,\alpha_{p}^{''}\}. \]
It is worth mentioning that
\[\alpha_{1}^{\prime} < \alpha_{1}^{''} < \cdots < \alpha_{p}^{\prime} < \alpha_{p}^{''} \text{ \ and \ } \alpha_{p}^{''}=\alpha_{t}+1.\] 

Now, we show formally a sequence counted by $b_{\alpha_{t}}(I)$. On the left we have the positions in the sequence and on the right we have the numbers corresponding to these positions:  

\[(1,2,\ldots,\alpha_{1}^{\prime}-1) \longrightarrow (2,3,\ldots,\alpha_{1}^{\prime})\]
\[(\alpha_{1}^{''},\alpha_{1}^{''}-1,\ldots,\alpha_{1}^{\prime}) \longrightarrow (\alpha_{1}^{\prime}+1,\alpha_{1}^{\prime}+2,\ldots,\alpha_{1}^{''}+1)\]
\[(\alpha_{1}^{''}+1,\alpha_{1}^{''}+2,\ldots,\alpha_{2}^{\prime}-1) \longrightarrow (\alpha_{1}^{''}+2,\alpha_{1}^{''}+3,\ldots,\alpha_{2}^{\prime})\]
\[(\alpha_{2}^{''},\alpha_{2}^{''}-1,\ldots,\alpha_{2}^{\prime}) \longrightarrow (\alpha_{2}^{\prime}+1,\alpha_{2}^{\prime}+2,\ldots,\alpha_{2}^{''}+1)\]
\[\vdots\]
\[(\alpha_{p-1}^{''}+1,\alpha_{p-1}^{''}+2,\ldots,\alpha_{p}^{\prime}-1) \longrightarrow (\alpha_{p-1}^{''}+2,\alpha_{p-1}^{''}+3,\ldots,\alpha_{p}^{\prime})\]
\[(\alpha_{p}^{''}-1,\alpha_{p}^{''}-2,\ldots,\alpha_{p}^{\prime})
\longrightarrow (\alpha_{p}^{\prime}+1,\alpha_{p}^{\prime}+2,\ldots,\alpha_{p}^{''}).\]

In the example in Figure \ref{fig2}, we show one such sequence.

\begin{figure}[h]
\centering
\centering
\includegraphics[scale=0.7]{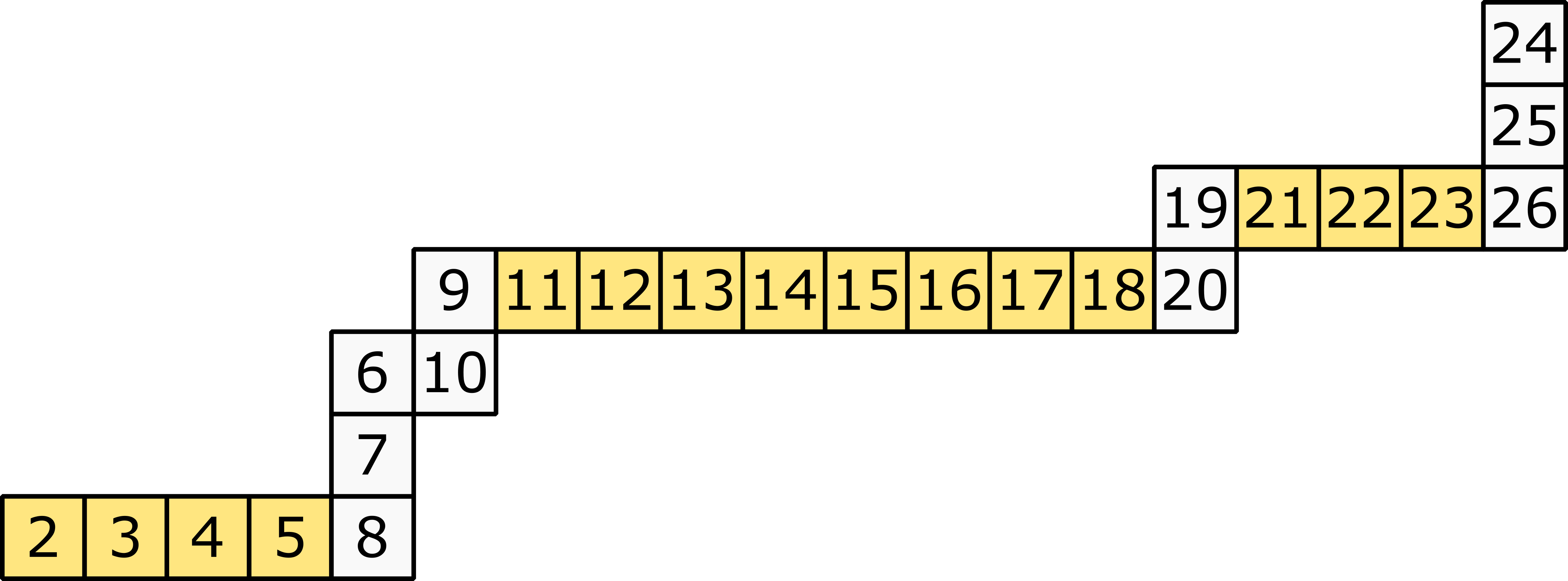}
\caption{Sequence counted by $b_{\alpha_{t}}(I)$.}
\label{fig2}
\end{figure}

Let us replace one-by-one the numbers in the sequence we proposed for $b_{\alpha_{t}}(I)$  with the numbers in the sequence we proposed for $b_{L}(I)$. We replace the numbers in this order: we first replace the smallest number, then the second smallest, the third, and so on. By doing this we preserve the semistandard Young tableau in all median positions. Because we change one number at a time, we get that for every $i: \ L\leq i\leq \alpha_{t}$ there is a sequence counted by $b_{i}(I)$.
\end{proof}

\begin{corollary}
For every integer $k$, the coefficients of $d^{\infty}(I,n)$ in the base $\left(\binom{n+k}{i}\right)_{i=0}^{\infty}$ are integers.
\end{corollary}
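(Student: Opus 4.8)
The plan is to deduce the statement from the integrality of the coefficients in the single base $\bigl(\binom{n-1}{i}\bigr)_{i=0}^{\infty}$, which is already available: by Theorem~\ref{theo1} the coefficients $b_{i}(I)$ of Definition~\ref{def3} are (non-negative) integers, and all but finitely many of them vanish, since $b_{i}(I)=0$ for $i<L$ and for $i>\alpha_{t}$. Since $\binom{n+k}{i}$ is a polynomial in $n$ of degree exactly $i$, the family $\bigl(\binom{n+k}{i}\bigr)_{i=0}^{\infty}$ is a $\mathbb{Q}$-basis of the polynomials in $n$, so $d^{\infty}(I,n)$ has a unique expansion in it, and I only need to show that every coefficient of that expansion lies in $\mathbb{Z}$.

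The single tool I would use is the Chu--Vandermonde identity
\[
\binom{x+y}{i}=\sum_{j=0}^{i}\binom{x}{j}\binom{y}{i-j},
\]
which, for each fixed integer $y$, is a polynomial identity in $x$. Specializing $x=n+k$ and $y=-1-k$ gives
\[
\binom{n-1}{i}=\sum_{j=0}^{i}\binom{-1-k}{i-j}\binom{n+k}{j},
\]
where each coefficient $\binom{-1-k}{i-j}$ is an \emph{integer}, being a binomial coefficient $\binom{z}{r}=z(z-1)\cdots(z-r+1)/r!$ with integer top entry $z=-1-k$ and non-negative integer bottom entry $r=i-j$. I would then substitute this into $d^{\infty}(I,n)=\sum_{i}b_{i}(I)\binom{n-1}{i}$ and interchange the two (finite) sums to get
\[
d^{\infty}(I,n)=\sum_{j}\Biggl(\sum_{i\ge j}b_{i}(I)\binom{-1-k}{i-j}\Biggr)\binom{n+k}{j};
\]
since $b_{i}(I)=0$ for $i>\alpha_{t}$, the inner sum is a finite $\mathbb{Z}$-linear combination of integers, hence an integer, and by uniqueness of the expansion these inner sums are exactly the coefficients of $d^{\infty}(I,n)$ in the base $\bigl(\binom{n+k}{i}\bigr)_{i=0}^{\infty}$.

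I do not expect a real obstacle: the entire content is the observation that passing between two bases of the form $\bigl(\binom{n+c}{i}\bigr)_{i}$ is governed by an integer (triangular, and unipotent up to signs) change-of-basis matrix. The only place I would spend an extra sentence is in confirming that $\binom{-1-k}{i-j}\in\mathbb{Z}$ when $k$ is a large negative integer, i.e.\ recalling the convention for $\binom{z}{r}$ with $z\in\mathbb{Z}$, $r\in\mathbb{Z}_{\ge 0}$, and that Chu--Vandermonde still holds under it. If one wishes the statement to include $I=\varnothing$, that case is immediate because $d^{\infty}(\varnothing,n)=1=\binom{n+k}{0}$.
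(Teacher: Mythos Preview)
Your proof is correct. Both you and the paper start from Theorem~\ref{theo1}, which says the coefficients $b_i(I)$ in the base $\bigl(\binom{n-1}{i}\bigr)_{i\ge 0}$ are integers with only finitely many nonzero. The difference lies in how you transfer this to the general base $\bigl(\binom{n+k}{i}\bigr)_{i\ge 0}$.

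The paper proceeds by a two-directional induction on $k$: from the base $\bigl(\binom{n+k}{i}\bigr)$ it passes to $\bigl(\binom{n+k-1}{i}\bigr)$ via Pascal's rule $\binom{n+k}{i}=\binom{n+k-1}{i}+\binom{n+k-1}{i-1}$, and to $\bigl(\binom{n+k+1}{i}\bigr)$ via the alternating-sum inversion $\binom{n+k}{i}=\sum_{j=0}^{i}(-1)^{i-j}\binom{n+k+1}{j}$. You instead apply Chu--Vandermonde once with $x=n+k$ and $y=-1-k$ to write $\binom{n-1}{i}=\sum_{j\le i}\binom{-1-k}{\,i-j}\binom{n+k}{j}$, observe that $\binom{-1-k}{\,i-j}\in\mathbb{Z}$, and read off the new coefficients directly. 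Your route is a one-shot argument that makes explicit that the change-of-basis matrix between any two such bases is integer (unitriangular), while the paper's inductive argument uses only the most elementary binomial identities but must treat the two directions separately. Either way the content is the same observation; your version is slightly more economical.
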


\begin{proof}
To prove this we use induction on $k$ in both directions. From Theorem \ref{theo1}, we know that the coefficients of $d^{\infty}(I,n)$ are integers in the base $\left(\binom{n-1}{i}\right)_{i=0}^{\infty}$. Let us suppose that the coefficients of $d^{\infty}(I,n)$ are integers in the base $\left(\binom{n+k}{i}\right)_{i=0}^{\infty}$. We want to prove that they are integers in both the bases $\left(\binom{n+k-1}{i}\right)_{i=0}^{\infty}$ and $\left(\binom{n+k+1}{i}\right)_{i=0}^{\infty}$. From Theorem \ref{theo1}, we know that the degree of $d^{\infty}(I,n)$ is $\alpha_{t}$, so we get:
\[
d^{\infty}(I,n) = a_{\alpha_{t}}\binom{n+k}{\alpha_{t}} +a_{\alpha_{t}-1}\binom{n+k}{\alpha_{t}-1} +\cdots+ a_{0}\binom{n+k}{0}
\]
for some integers $a_{0},a_{1},\ldots,a_{\alpha_{t}} \ (a_{\alpha_{t}}\neq 0)$. Using the identities:
\begin{align*}
\binom{n+k}{i}=\binom{n+k-1}{i}+\binom{n+k-1}{i-1}\\
\binom{n+k}{i}=\sum\limits_{j=0}^{i}(-1)^{i-j}\binom{n+k+1}{j}   
\end{align*}
we can express $d^{\infty}(I,n)$ in both the bases $\left(\binom{n+k-1}{i}\right)_{i=0}^{\infty}$ and $\left(\binom{n+k+1}{i}\right)_{i=0}^{\infty}$ and conclude that the coefficients in both bases are integers.
\end{proof}

Next, we look at the coefficients of $d^{\infty}(I,n)$ in the base $\left(\binom{n}{i}\right)_{i=0}^{\infty}$. We first introduce some notations, which will simplify the reasoning in the following proofs.    

\begin{definition}\label{def6}
For the non-empty set $I=\{\alpha_{1},\ldots,\alpha_{t}\}$, let $x_{i}(I)$ equal the number of sequences $v=(v_{1},v_{2},\ldots,v_{\alpha_{t}})$, which satisfy the following conditions:
\begin{itemize}
\item $\Des(v)=I^{-}$,
\item $v_{j}\in\{2,3,\dots i+1\}$,
\item for every $\ell\in\{2,3,\dots i+1\}$, there exists $j: \ v_{j}=\ell$.
\end{itemize}
\end{definition}

\begin{definition}\label{def7}
For a non-empty set $I=\{\alpha_{1},\ldots,\alpha_{t}\}$, let $y_{i}(I)$ equal the number of sequences $v=(v_{1},v_{2},\ldots,v_{\alpha_{t}})$, which satisfy the following conditions:
\begin{itemize}
\item $\Des(v)=I^{-}$,
\item $v_{j}\in\{1,2,\dots i+1\}$,
\item for every $\ell\in\{1,2,\dots i+1\}$, there exists $j: \ v_{j}=\ell$,
\item $v_{\alpha_{t}}\neq 1$.
\end{itemize}
\end{definition}

\begin{definition}\label{def4}
Let $c_{0}(I),c_{1}(I),c_{2}(I),\ldots$ be the coefficients of $d^{\infty}(I,n)$ in the base  
$\left(\binom{n}{i}\right)_{i=0}^{\infty}$:
\[
d^{\infty}(I,n) = c_{0}\binom{n}{0} + c_{1}\binom{n}{1} + c_{2}\binom{n}{2}+\cdots.
\]
\end{definition}

In the next theorem, we look at the coefficients $c_{i}(I)$. We derive a relation between $c_{i}(I)$ and $b_{i}(I)$ and find the exact values of $c_{i}(I)$ for some $i$.

\begin{theorem}\label{theo2}
Let $t=|I|$ and let $L$ be the length of the longest sequence of consecutive numbers in $I$. Then $c_{i}(I)=(-1)^{i+t}$ for all $i$ such that $0\leq i\leq L$.
\end{theorem}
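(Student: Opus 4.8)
The plan is to express the coefficients $c_i(I)$ in terms of the coefficients $b_i(I)$ of Definition~\ref{def3}, then use the vanishing $b_i(I)=0$ for $i<L$ (Theorem~\ref{theo1}) together with a single evaluation of the explicit formula of Theorem~\ref{theo6} at $n=0$.

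First I would record the change of basis between $\left(\binom{n-1}{i}\right)_{i\ge 0}$ and $\left(\binom{n}{i}\right)_{i\ge 0}$. Iterating $\binom{n}{i}=\binom{n-1}{i}+\binom{n-1}{i-1}$ gives $\binom{n-1}{\ell}=\sum_{i=0}^{\ell}(-1)^{\ell-i}\binom{n}{i}$. Substituting this into $d^{\infty}(I,n)=\sum_{\ell} b_{\ell}(I)\binom{n-1}{\ell}$ and collecting the coefficient of $\binom{n}{i}$ yields
\[
c_i(I)=\sum_{\ell\ge i}(-1)^{\ell-i}\,b_{\ell}(I),
\]
which is the promised relation between the two families of coefficients.

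Now I would invoke Theorem~\ref{theo1}, which gives $b_{\ell}(I)=0$ for all $\ell<L$. Hence for every $i$ with $0\le i\le L$ the terms with $\ell<L$ drop out, so $c_i(I)=\sum_{\ell\ge L}(-1)^{\ell-i}b_{\ell}(I)$; comparing the expressions at $i$ and $i+1$ shows $c_{i+1}(I)=-c_i(I)$ for $0\le i\le L-1$, and therefore $c_i(I)=(-1)^i c_0(I)$ for all $0\le i\le L$. It remains to compute $c_0(I)$. Setting $n=0$ in Definition~\ref{def4} and using $\binom{0}{i}=0$ for $i\ge 1$ gives $c_0(I)=d^{\infty}(I,0)$, so I would evaluate the formula of Theorem~\ref{theo6} at $n=0$. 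Each $\sigma_j$ appearing there is a nonempty sum of the positive integers $\beta_1,\dots,\beta_t$, hence $\sigma_j\ge 1$, so $\binom{-1+\sigma_j}{\sigma_j}=\binom{\sigma_j-1}{\sigma_j}=0$. Consequently every summand indexed by a composition $A$ of length $s\ge 2$ vanishes, while the unique composition $A=(t)$ of length $s=1$ has $f_{\beta}(A)=(\alpha_t)$ and contributes $(-1)^{t-1}\bigl(\binom{\alpha_t-1}{\alpha_t}-1\bigr)=(-1)^{t-1}(0-1)=(-1)^t$. Thus $c_0(I)=(-1)^t$, and so $c_i(I)=(-1)^i(-1)^t=(-1)^{i+t}$ for $0\le i\le L$, as claimed.

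The base-change identity and the sign bookkeeping are routine; the only step that needs genuine care is the evaluation at $n=0$, where one must verify that $\sigma_j\ge 1$ for every block of every composition so that all the binomials $\binom{\sigma_j-1}{\sigma_j}$ really vanish — this is precisely what kills every summand except the one coming from the length-one composition. I expect this to be the main (and only mild) obstacle; the rest follows mechanically from Theorems~\ref{theo1} and~\ref{theo6}.
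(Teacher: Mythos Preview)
Your argument is correct, and the first two steps---the change of basis giving $c_i(I)=\sum_{\ell\ge i}(-1)^{\ell-i}b_\ell(I)$ and the deduction $c_i(I)=(-1)^i c_0(I)$ for $0\le i\le L$ from the vanishing of $b_\ell(I)$ for $\ell<L$---coincide exactly with the paper's proof. Where you diverge is in computing $c_0(I)$: the paper introduces auxiliary counts $x_i(I)$ and $y_i(I)$, proves the recursion $c_0(I)=-c_0(I^{-})$, and unwinds it down to the singleton case $c_0(\{\alpha_1\})=-1$; you instead evaluate the explicit formula of Theorem~\ref{theo6} at $n=0$ and observe that every composition of length $\ge 2$ contributes zero because $\binom{\sigma_j-1}{\sigma_j}=0$ once $\sigma_j\ge 1$. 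Your route is shorter and avoids the extra definitions, at the cost of leaning on Theorem~\ref{theo6}; the paper's recursion is self-contained in Section~\ref{coef} and yields the side relation $c_0(I)=-c_0(I^{-})$, which may be of independent interest. The one point you should make explicit is that the identity of Theorem~\ref{theo6} holds as an equality of polynomials in $n$ (both sides being polynomials that agree for all $n\ge\alpha_t$), so specializing to $n=0$ is legitimate.
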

\begin{proof}
Because we do not have much information about the coefficients \linebreak $c_{0}(I),c_{1}(I),c_{2}(I),\ldots$, we want to express them in terms of $b_{0}(I),b_{1}(I),b_{2}(I),\ldots$. To do this we use the identity $\binom{n}{i}=\binom{n-1}{i}+\binom{n-1}{i-1}$.

\begin{equation*}
\begin{split}
d^{\infty}(I,n) & = b_{\alpha_{t}}(I)\binom{n-1}{\alpha_{t}} +b_{\alpha_{t}-1}(I)\binom{n-1}{\alpha_{t}-1} +\cdots+ b_{L}(I)\binom{n-1}{0}\\
& = b_{\alpha_{t}}(I) \left(\binom{n-1}{\alpha_{t}}+\binom{n-1}{\alpha_{t}-1}\right) + \left(b_{\alpha_{t}-1}(I)-b_{\alpha_{t}}(I)\right) \left(\binom{n-1}{\alpha_{t}-1}+\binom{n-1}{\alpha_{t}-2}\right)+ 
\cdots\\
& \phantom{=} + \sum\limits_{i=1}^{\alpha_{t}}(-1)^{i+1}b_{i}(I)\left(\binom{n-1}{1}+\binom{n-1}{0}\right) +
\sum\limits_{i=0}^{\alpha_{t}}(-1)^{i}b_{i}(I)\binom{n-1}{0}\\
& = b_{\alpha_{t}}(I)\binom{n}{\alpha_{t}} + \cdots
 + \sum\limits_{i=1}^{\alpha_{t}}(-1)^{i+1}b_{i}(I)\binom{n}{1} + \sum\limits_{i=0}^{\alpha_{t}}(-1)^{i}b_{i}(I)\binom{n}{0}.
\end{split}
\end{equation*}

Therefore, we obtain that
\[
c_{k}(I)=\sum\limits_{i=k}^{\alpha_{t}}(-1)^{i+k}b_{i}(I).
\]

Because the coefficients $0=b_{0}(I)=b_{1}(I)=\cdots=b_{L-1}(I)$, we get that
\begin{equation}\label{equ6}
c_{0}(I)=-c_{1}(I)=c_{2}(I)=-c_{3}(I)=\cdots=(-1)^{L}c_{L}(I).    
\end{equation}

Our goal is to compute $c_{0}(I)$.
\[
c_{0}(I)=\sum\limits_{i=0}^{\alpha_{t}}(-1)^{i}b_{i}(I).
\]

From Definitions \ref{def6} and \ref{def7} we get
\begin{equation}\label{equ4}
b_{i}(I)=x_{i}(I)+y_{i}(I).    
\end{equation}

Let us consider sequences with length $\alpha_{t}$ and non-empty descent set $I^{-}$. The number of such sequences whose elements are numbers from the set $\{2,3,\ldots,i+2\}$ is equal to $x_{i+1}(I)$. The value of $x_{i+1}(I)$ is equal to the number of such sequences whose elements are taken from the set $\{1,2,\ldots,i+1\}$. The last number can be divided into two parts: the number of sequences with last element bigger than $1$, which is $y_{i}(I)$, and the number of sequences with last element exactly one $1$, which is $b_{i}(I^{-})$. Therefore, we get the equation
\[
x_{i+1}(I)=y_{i}(I)+b_{i}(I^{-}).
\]

Combining it with Equation \eqref{equ4} we obtain the relation:
\[
b_{i}(I)=x_{i+1}(I)+x_{i}(I)-b_{i}(I^{-}).
\]

Also, it is worth mentioning that
\[
b_{0}(I)=x_{1}(I)-b_{0}(I^{-}),
\]
\[
b_{\alpha_{t}}(I)=x_{\alpha_{t}}(I)-b_{\alpha_{t}}(I^{-}),
\]
because $x_{0}(I)=x_{\alpha_{t}+1}(I)=0$. Using this information, let us compute $c_{0}(I)$.

\begin{align*}
c_{0}(I) & =\sum_{i=0}^{\alpha_{t}} (-1)^{i}b_{i}(I)= \sum_{i=0}^{\alpha_{t}}(-1)^{i}\left(x_{i+1}(I)+x_{i}(I)-b_{i}(I^{-})\right) \\
& = - \sum_{i=0}^{\alpha_{t}}(-1)^{i}b_{i}(I^{-})=-c_{0}(I^{-}).
\end{align*}

We can continue this process until we get to $c_{0}(\{\alpha_{1}\})$. We obtain that
\begin{equation}\label{equ5}
c_{0}(I)=(-1)^{t-1} c_{0}(\{\alpha_{1}\}).    
\end{equation}

So, now our main goal is to calculate $c_{0}(\{\alpha_{1}\})$. We start by computing $d^{\infty}(\{\alpha_{1}\},n)$. As we know from Proposition \ref{prop3}, $d^{\infty}(\{\alpha_{1}\},n)$ is equal to the number of sequences $v=(v_{1},v_{2},\ldots,v_{\alpha_{1}})$, whose elements are numbers from the set $\{1,2,\ldots,n\}$, which have last element $v_{\alpha_{1}}$ bigger than $1$, and which have descent set $\varnothing$. So, the sequence $v$ looks like
\[
1\leq v_{1}\leq v_{2}\leq\cdots\leq v_{\alpha_{1}}\leq n.
\]

We have $\binom{n+\alpha_{t}-1}{\alpha_{1}}-1$ such sequences. We remove one because of the sequence $(1,1,\ldots,1)$, which is the only sequence with last element equal to $1$. Using the following identity, we find the coefficients $c_{i}(\{\alpha_{1}\})$.
\[
-1+\binom{n+\alpha_{t}-1}{\alpha_{1}} = -\binom{n}{0}+\sum\limits_{i=1}^{\alpha_{1}}\binom{\alpha_{1}-1}{\alpha_{1}-i}\binom{n}{i}.
\]

We get that $c_{0}(\{\alpha_{1}\})=-1$. After replacing it in Equation \eqref{equ5} we obtain that $c_{0}(I)=(-1)^{t}$. Finally, if we replace this in Equation \eqref{equ6} we derive:
\[c_{i}(I)=(-1)^{i+t}\text{ \ for \ } 0\leq i\leq L. \]
\end{proof}

The next theorem is a continuation of Theorems \ref{theo1} and \ref{theo2}. We are considering the signs of the coefficients of $d^{\infty}(I,n)$ in the base $\left(\binom{n+k}{i}\right)_{i=0}^{\infty}$ for different values of $k$.

\begin{theorem}
Let $k$ be an integer and the set $I$ be non-empty. We consider the coefficients of $d^{\infty}(I,n)$ in the base $\left(\binom{n+k}{i}\right)_{i=0}^{\infty}$. If $k\leq -1$, all coefficients are non-negative. If $k\geq 0$, there exists at least one negative coefficient.
\end{theorem}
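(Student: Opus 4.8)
The plan is to isolate one monotonicity observation and then feed it the two results already proved: Theorem~\ref{theo1}, which says $d^{\infty}(I,n)=\sum_{i}b_{i}(I)\binom{n-1}{i}$ with every $b_{i}(I)\geq 0$, and Theorem~\ref{theo2}, which pins down the coefficients $c_{i}(I)$ of $d^{\infty}(I,n)$ in the base $\left(\binom{n}{i}\right)_{i\geq 0}$ for small $i$. In effect, the bases $\left(\binom{n+k}{i}\right)_{i\geq 0}$ form a family indexed by $k$, and decreasing $k$ can only turn negative coefficients into non-negative ones, never the reverse; the two known endpoints $k=-1$ (all non-negative) and $k=0$ (a negative one present) then force the stated threshold behaviour.

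\emph{Step 1: a monotonicity lemma.} I would first prove: if the coefficients of $d^{\infty}(I,n)$ in the base $\left(\binom{n+k}{i}\right)_{i\geq 0}$ are all non-negative, then so are its coefficients in the base $\left(\binom{n+k'}{i}\right)_{i\geq 0}$ for every integer $k'\leq k$. This follows at once from the Chu--Vandermonde identity
\[
\binom{n+k}{i}=\sum_{j\geq 0}\binom{n+k'}{j}\binom{k-k'}{\,i-j\,},
\]
which holds as a polynomial identity in $n$ because $k-k'$ is a non-negative integer (so all but finitely many summands vanish), together with the fact that each $\binom{k-k'}{i-j}$ is a non-negative integer. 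Substituting this into $d^{\infty}(I,n)=\sum_{i}a_{i}\binom{n+k}{i}$ and collecting the coefficient of each $\binom{n+k'}{j}$ (well defined since $\left(\binom{n+k'}{j}\right)_{j\geq 0}$ is a basis of $\mathbb{Q}[n]$) expresses it as a non-negative combination of the $a_{i}$.

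\emph{Step 2: assembling the two cases.} For $k\leq -1$: Theorem~\ref{theo1} is precisely the statement that the coefficients in the base $\left(\binom{n-1}{i}\right)_{i\geq 0}$ are non-negative, so applying Step~1 with $k=-1$ and an arbitrary $k'\leq -1$ gives the claim for all $k\leq -1$. For $k\geq 0$: suppose, for contradiction, that for some $k\geq 0$ every coefficient of $d^{\infty}(I,n)$ in $\left(\binom{n+k}{i}\right)_{i\geq 0}$ is non-negative. Applying Step~1 with $k'=0$ shows every $c_{i}(I)$ is non-negative. But $I$ is non-empty, so $L\geq 1$, and Theorem~\ref{theo2} then applies to $i=0$ and $i=1$, giving $c_{0}(I)=(-1)^{t}$ and $c_{1}(I)=(-1)^{t+1}$; exactly one of these equals $-1<0$, a contradiction. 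Hence some coefficient is negative.

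\emph{Where the difficulty lies.} There is no heavy computation here. The one point that needs care is the justification of the Chu--Vandermonde expansion as a genuine polynomial identity in $n$ — this is valid exactly because the shift $k-k'$ is a non-negative integer, which is what makes the relevant binomial coefficients both non-negative and eventually zero — and the small bookkeeping that Theorems~\ref{theo1} and~\ref{theo2} supply exactly the endpoints $k=-1$ and $k=0$ of this monotone family. I expect no other obstacles; indeed this argument also re-derives the $k=0$ case of the theorem directly from Theorem~\ref{theo2}.
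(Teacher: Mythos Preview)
Your proposal is correct and follows essentially the same approach as the paper: establish that non-negativity of the coefficients in the base $\left(\binom{n+k}{i}\right)_{i\geq 0}$ is monotone decreasing in $k$, then invoke Theorems~\ref{theo1} and~\ref{theo2} at $k=-1$ and $k=0$ respectively. The only cosmetic difference is that the paper derives the monotonicity one step at a time from Pascal's rule $\binom{n+k}{i}=\binom{n+k-1}{i}+\binom{n+k-1}{i-1}$, while you jump directly from $k$ to any $k'\leq k$ via Chu--Vandermonde; these are equivalent, and your explicit identification of the negative coefficient at $k=0$ (namely whichever of $c_{0}(I)$, $c_{1}(I)$ equals $-1$) is exactly what the paper's appeal to Theorem~\ref{theo2} amounts to.
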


\begin{proof}
From the identity $\binom{n+k}{i}=\binom{n+k-1}{i} + \binom{n+k-1}{i-1}$, it follows that if the coefficients of $d^{\infty}(I,n)$ are non-negative in the base $\left(\binom{n+k}{i}\right)_{i=0}^{\infty}$, they should also be non-negative in the base $\left(\binom{n+k-1}{i}\right)_{i=0}^{\infty}$. From Theorems $\ref{theo1}$ and $\ref{theo2}$, we know that the coefficients in the base $\left(\binom{n-1}{i}\right)_{i=0}^{\infty}$ are non-negative and there exists at least one negative coefficient in the base $\left(\binom{n}{i}\right)_{i=0}^{\infty}$. Therefore, for $k\leq -1$ the coefficients of $d^{\infty}(I,n)$ in the base $\left(\binom{n+k}{i}\right)_{i=0}^{\infty}$ are non-negative integers and for $k\geq 0$ there exists at least one negative coefficient.
\end{proof}

\begin{remark}
For $k\geq 0$, there exist negative coefficients in the representation of $d^{\infty}(I,n)$ in the base $\left(\binom{n+k}{i}\right)_{i=0}^{\infty}$. This means that we cannot give a combinatorial interpretation to these coefficients. Therefore, the base $\left(\binom{n-1}{i}\right)_{i=0}^{\infty}$ is optimal for assigning a combinatorial interpretation to the coefficients.
\end{remark}

\section{Acknowledgments} 

The majority of this paper was done during RSI (Research Science Institute) in the summer of 2021. First I want to thank my mentor at RSI, Pakawut Jiradilok. He proposed this topic and guided me through it. Also, he gave useful advice for the mathematical part and helped a lot with the editing of the paper. Many thanks to Dr. John Rickert, Dr. Tanya Khovanova, Prof. David Jerison, \linebreak Prof. Ankur Moitra, Yunseo Choi, Dr. Jenny Sendova, Angelin Mattew, \linebreak Viktor Kolev, Miles Edwards,  Dimitar Dimitrov, and Martin Dimitrov who played a huge role in helping me achieve my goals during RSI. Last but not least, I am very grateful to HSSIMI and Sts. Cyril and Methodius International Foundation for making my participation in RSI possible.

\bibliographystyle{alpha}
\bibliography{biblio}

\end{document}